\documentclass[12pt]{amsart}
\usepackage{amsmath,amssymb,amsbsy,amsfonts,latexsym,amsopn,amstext,
                                               amsxtra,euscript,amscd,bm}
\usepackage{url}

\usepackage{mathrsfs}

\usepackage{color}
\usepackage[colorlinks,linkcolor=blue,anchorcolor=blue,citecolor=blue,backref=page]{hyperref}

\usepackage{graphics,epsfig}
\usepackage{graphicx}
\usepackage{float}
\usepackage{epstopdf}
\hypersetup{breaklinks=true}
\usepackage{verbatim}
\usepackage{graphicx}
\usepackage{caption}
\usepackage{subcaption}

\usepackage{soul} 
\usepackage{ulem}

\usepackage{float}
\usepackage{epstopdf}
\hypersetup{breaklinks=true}

\usepackage[np]{numprint}
\npdecimalsign{\ensuremath{.}}

\usepackage{bibentry}

\usepackage[english]{babel}
\usepackage{mathtools}
\usepackage{todonotes}
\usepackage{url}
\usepackage[norefs,nocites]{refcheck}

\newtheorem{theorem}{Theorem}
\newtheorem*{thm*}{Theorem}
\newtheorem{lemma}[theorem]{Lemma}

\newtheorem{prop}[theorem]{Proposition}

\numberwithin{equation}{section}
\numberwithin{theorem}{section}
\numberwithin{table}{section}

\newcommand{\sq}{\operatorname{sq}}

\newcommand{\Z}{{\mathbb Z}} 

\newcommand{\N}{{\mathbb N}}

\newcommand{\lcm}{\operatorname{lcm}}

   \def \e{\varepsilon}   \def \l{\lambda}  \def \t{\vartheta}

\newfont{\teneufm}{eufm10}
\newfont{\seveneufm}{eufm7}
\newfont{\fiveeufm}{eufm5}
\newfam\eufmfam
     \textfont\eufmfam=\teneufm
\scriptfont\eufmfam=\seveneufm
     \scriptscriptfont\eufmfam=\fiveeufm
\def\fU{{\mathfrak U}}

\def \balpha{\bm{\alpha}}
\def \bbeta{\bm{\beta}}

\def\eqref#1{(\ref{#1})}

\def\le{\leqslant}
\def\leq{\leqslant}
\def\ge{\geqslant}
\def\leq{\leqslant}

\def\cN{{\mathcal N}}

\def\cR{{\mathcal R}}
\def\cS{{\mathcal S}}
\def\cT{{\mathcal T}}

\def\cX{{\mathcal X}}

\def\cZ{{\mathcal Z}}

\def\e{{\mathbf{\,e}}}

\def\lcm{{\mathrm{lcm}}}
\def\ls#1#2{\(\frac{#1}{#2}\)}

 \def\0{{\mathbf{0}}}

\def\({\left(}
\def\){\right)}
\def\l|{\left|}
\def\r|{\right|}

\def\mand{\qquad \mbox{and} \qquad}

\def \fP {\mathfrak P}

\newif\ifcomment

\usepackage{graphicx}
\begin{document}

\title[A large sieve inequality over short intervals]{A large sieve inequality for sums of Legendre symbols over short intervals}

\author[M. Munsch] {Marc Munsch}
\address{
Université Jean Monnet, Centrale Lyon, INSA Lyon, Université Claude Bernard Lyon 1, CNRS, ICJ UMR5208, 42023 Saint-Étienne, France}
\email{marc.munsch@univ-st-etienne.fr}

\author[I. E. Shparlinski] {Igor E. Shparlinski}
\address{School of Mathematics and Statistics, University of New South Wales, Sydney NSW 2052, Australia}
\email{igor.shparlinski@unsw.edu.au}

\author[Y.-C. Sun] {Yu-Chen Sun}
\address{
Department of Mathematics,
University of Bristol, Woodland Rd, Bristol BS8 1UG, UK}
\email{yuchensun93@163.com}

\author[Y. Xiao] {Yixiu Xiao}
\address{School of Mathematical Sciences, Shanghai Jiao Tong University, 800 Dongchuan RD, 200240 Shanghai, China}
\email{yixiuxiao98@gmail.com}

\begin{abstract}
Using the Burgess bound and the Selberg sieve, we obtain an upper bound for
the second moment of sums of Legendre symbols over intervals $[u+1,u+h]$,
with the modulus ranging over primes $p\in[Q,2Q]$. The bound is nontrivial
and yields a power saving in $h$, uniformly for $u\le Q$, provided that
$h\ge\psi(Q)$, where $\psi(Q)\to\infty$ as $Q\to\infty$. This may be
viewed as a short-interval analogue of a result of D.~R.~Heath-Brown (1995)
on moments of quadratic character sums over the initial interval $[1,h]$.
In particular, it implies that, for any prescribed interval of this length,
the quadratic residues and non-residues are asymptotically equidistributed
for almost all primes $p$. We also establish estimates for higher moments
conditionally on the Generalised Riemann Hypothesis. These bounds rely on a 
sharp uniform estimate for the number of tuples of
integers in a shifted interval whose product is a square. 
\end{abstract}

\keywords{Character sums, quadratic non-residues, large sieve, Selberg sieve, products of integers, perfect squares.}
\subjclass[2020]{11L40, 11N36, 11N37}
\maketitle

\tableofcontents

\section{Introduction} 
\subsection{Set-up and background on character sums}
The distribution of quadratic residues and non-residues modulo a prime $p$
is a central problem in number theory. It is particularly difficult to
understand this distribution in short intervals of the form $[u+1,u+h]$
when $h$ is much smaller than $u$. Even the Generalised Riemann Hypothesis
(GRH) does not appear to yield useful information in this setting. We
therefore study the problem on average over $p$, using classical techniques
from analytic number theory. Equivalently, we seek bounds for short sums of
Legendre symbols modulo $p$, averaged over $p$.

For a prime $p\ge 3$ and real numbers $u\ge 0$ and $h\ge 1$, we consider the
character sum
\[
S_p(u,h)=\sum_{u<n\le u+h}\ls{n}{p}.
\]  
The celebrated Burgess bound~\cite{Burg} gives
\[
S_p(u,h)=o(h),
\]
provided that $h\ge p^{1/4+\varepsilon}$ for some fixed
$\varepsilon>0$; see~\cite{dlBMT,KSY} for recent refinements. For initial
intervals, corresponding to $u=0$, it is known~\cite{BGH-BS} that if
$h\ge p^{1/(4\sqrt e)+\varepsilon}$, then $|S_p(0,h)|\le\eta h$ for some
$\eta=\eta(\varepsilon)<1$.

Moreover, Konyagin and Shparlinski~\cite{KonShp} proved that, for any
function $\psi(Q)\to\infty$ as $Q\to\infty$ and any prescribed $u\le Q$,
the inequality $S_p(u,h)<h$ holds for all but $o(Q/\log Q)$ primes
$p\in[Q,2Q]$, provided that $h\ge\psi(Q)$. Thus, for almost all such
primes, the interval $(u,u+h]$ contains a quadratic non-residue. For $u=0$,
this also follows from a stronger result of Erd{\H o}s~\cite{Erd}; see also
Linnik's celebrated theorem~\cite{Lin}.

For initial intervals, it is also well known that the GRH yields essentially
square-root cancellation, $|S_p(0,h)|\le h^{1/2+o(1)}$, for considerably
shorter intervals. Although a direct reference does not appear to be readily
available, this estimate can be deduced from~\cite[Theorem~2]{GrSo}; see
also Lemma~\ref{lem:GRH}. It is unclear, however, whether this approach
yields useful estimates for short intervals away from the origin.

Our aim is to obtain nontrivial bounds for moments of these sums over primes
$p\in[Q,2Q]$, where $Q$ is a positive real parameter. More generally, we
consider the weighted sums
\[
S_p(\balpha,u,h)=\sum_{u<n\le u+h}\alpha_n\ls{n}{p},
\]  
where $\balpha=(\alpha_n)_{u<n\le u+h}$ is a sequence of complex weights
supported on the half-open interval $(u,u+h]$ and satisfying
\[
|\alpha_n|\le1,\qquad u<n\le u+h.
\]

For $s\ge1$, define
\[
M_s(Q;\balpha,u,h)=\sum_{\substack{p\in[Q,2Q]\\p~\text{prime}}}
 |S_p(\balpha,u,h)|^s.
\]

When $u=0$, Heath-Brown's result~\cite[Theorem~1]{H-B1} immediately gives
\begin{equation}
\label{eq: HB-Bound}
M_2(Q;\balpha,0,h)\le(hQ)^{o(1)}(Q+h)h,
\end{equation}
as $h,Q\to\infty$. The standard bound for the divisor function,
see~\cite[Equation~(1.81)]{IwKow}, extends this estimate to higher moments:
\[
M_{2s}(Q;\balpha,0,h)\le(hQ)^{o(1)}(Q+h^s)h^s,
\]
for every fixed positive integer $s$.

For general $u$, Heath-Brown~\cite[Lemma~5]{H-B2} also obtained a
first-moment bound for the absolute values of certain unweighted variants
of $S_p(\balpha,u,h)$. This bound is nontrivial when $h$ is relatively
large compared with $Q$, namely when $h\ge Q^{2/5+\varepsilon}$.

Lamzouri~\cite{Lam} and, more recently, Harper~\cite{Harp} studied the
distribution of $S_p(\balpha,u,h)$, and of analogous sums for other
nonprincipal multiplicative characters, with $p$ fixed and $u$ varying.
Tang and Zhang~\cite{TaZh} studied averages over $p$ of Legendre-symbol
sums over initial intervals. Recent work of the first author and
Toma~\cite{MuTo} contains both new results and a comprehensive account of
earlier work on moments of quadratic-character sums over initial intervals.
None of these results, however, addresses the problem considered here.

 \subsection{Main results}
 
Throughout the paper, the notation
\[
X=O(Y),\qquad X\ll Y,\qquad Y\gg X,
\]
means that $|X|\le cY$ for some positive constant $c$. This constant
may depend on the fixed positive integers $\ell$, $r$ and $s$ when such
dependence is clear from context; otherwise it is absolute. For $X\ge1$,
we write $X^{o(1)}$ for a positive quantity $f(X)$ satisfying
\[
\lim_{X\to\infty}\frac{\log f(X)}{\log X}=0.
\]

Combining and extending ideas from~\cite{KonShp,OstShp}, we use the
Selberg sieve to estimate the even moments $M_{2s}(Q;\balpha,u,h)$.

We first note that when $Q\ge h\ge u$, Heath-Brown's
bound~\eqref{eq: HB-Bound}, applied to the extended sequence below, gives
\begin{equation}
\label{eq: HB-Bound Shift}
M_2(Q;\balpha,u,h)=M_2(Q;\widetilde\balpha,0,u+h)
 \le Q^{1+o(1)}h,
\end{equation}
where $\widetilde\alpha_n=0$ for $n\le u$ and
$\widetilde\alpha_n=\alpha_n$ otherwise.

When $h$ is a fixed positive power of $Q$, estimate~\eqref{eq: HB-Bound Shift}
exhibits
square-root cancellation on average. For smaller $h$, however, it is
trivial, even when $u=0$.

We henceforth focus on the range $Q\ge u\ge h$. As usual, $\pi(Q)$
denotes the number of primes not exceeding $Q$.

We first give a bound on the second moment $M_{2}(Q; \balpha, u,h)$.

 \begin{theorem}
 \label{thm: Bound M2Quh}  
For all integers $2\le h<u\le Q$, we have
\[
M_2(Q;\balpha,u,h)\ll h\pi(Q)\log h+h^2Q^{7/8+o(1)}.
\]
\end{theorem}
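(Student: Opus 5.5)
Expanding the square and interchanging summations gives
\[
M_2(Q;\balpha,u,h)\le \sum_{u<m,n\le u+h}\left|\sum_{\substack{p\in[Q,2Q]\\ p~\text{prime}}}\ls{mn}{p}\right|.
\]
We split the pairs $(m,n)$ according to whether $mn$ is a perfect square. Each term is trivially at most $\pi(Q)$. So the plan reduces to two estimates: an upper bound for the number of pairs with $mn=\square$, and a nontrivial bound for the inner prime sum when $mn\ne\square$.

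\textbf{Square products.} Write $m=dk^2$ and $n=d\ell^2$ with $d$ squarefree. We must count pairs in $(u,u+h]$ sharing the same squarefree kernel. For fixed $m$, the partner $n$ satisfies $n/m=(\ell/k)^2$ with $|\ell-k|\,(\ell+k)\,d\le h$. Since $dk^2\approx u\ge h$, this forces $|\ell-k|\ll h/\sqrt{du}$.

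Summing over $m=dk^2$ in the interval, the number of pairs with $\ell\ne k$ should be $\ll h\log h$. For $d$ in a dyadic range, the number of $k$ is $\ll h/\sqrt{du}+1$ and the number of $\ell$ is $\ll h/\sqrt{du}$. A short computation bounding the number of $d$ whose multiples $dk^2$ hit the interval should give the $\log h$; in the worst case one may use a divisor-type bound. Together with the $h$ diagonal pairs, this gives $\ll h\log h$ square pairs, each contributing $\pi(Q)$. That yields the first term $h\pi(Q)\log h$.

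\textbf{Non-square products.} For $mn\ne\square$, write $mn=f g^2$ with $f>1$ squarefree. Then $\left(\frac{mn}{p}\right)=\left(\frac{f}{p}\right)$ for $p\nmid g$, and $\left(\frac{\cdot}{p}\right)$ viewed as a function of $p$ is a nonprincipal character (via quadratic reciprocity) of conductor $\le 4f\le 4(2Q)^2$.

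The task is to show $\sum_{p\in[Q,2Q]}\chi(p)\ll Q^{7/8+o(1)}$, which gives the second term $h^2Q^{7/8+o(1)}$ after summing trivially over the $h^2$ pairs. Since the conductor can be as large as $Q^2$, neither Burgess nor Pólya--Vinogradov applies to a sum of length $Q$ by itself. Instead, we replace the prime indicator by Vaughan's identity (or Heath-Brown's identity), which reduces matters to type I sums $\sum_{a}\sum_{b}\chi(ab)$ and type II bilinear sums. Type I sums with a long smooth variable $b$ are handled by Burgess with $r=2$, which is nontrivial once the length exceeds $q^{1/4+\e}\approx Q^{1/2}$. Type II sums are handled by Cauchy--Schwarz followed by completing or Burgess on the resulting sums.

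The cleanest alternative is to exploit the square structure directly: $\left(\frac{mn}{p}\right)=\left(\frac{m}{p}\right)\left(\frac{n}{p}\right)$, where $m,n\approx u\le Q$. Each character modulo $p$ is then applied to short smooth variables, so one could sum over $n$ with $p$ fixed. I expect the intended route is the Selberg sieve mentioned in the paper: majorize the primes by Selberg weights $\left(\sum_{d\mid k,\,d\le D}\lambda_d\right)^2$ over integers $k\in[Q,2Q]$. After opening the weights, we need
\[
\sum_{k\equiv0\ (\mathrm{lcm}(d_1,d_2))}\ls{k}{f},
\]
where the symbol is Jacobi in $k$ modulo $f$. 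This is a sum over integers of a character of modulus $f\le 4Q^2$ and length $Q/[d_1,d_2]$. Burgess with $r=3$ gives $L^{1-1/r}q^{(r+1)/(4r^2)}=L^{2/3}Q^{2/9}$. The sieve main term uses $\sum_{k}$ of the character, which cancels, so the total error is $D^2$ times the Burgess error. Optimizing $D$ against the sieve main term $Q/\log D$ (which still gives the upper bound $\ll\pi(Q)$ in the diagonal case) should produce the exponent $7/8$.

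The main obstacle is this step. Since $\left(\frac{mn}{p}\right)$ is not nonnegative, the sieve majorant cannot be used blindly. To handle this, I would decompose the character into its positive and negative parts as $(1\pm\chi)/2$, apply the Selberg upper bound to each, and subtract. The loss is then $O(Q/\log D)$ rather than $o$, so I anticipate needing the sieve only as an upper bound combined with the trivial diagonal. The exact balancing of Burgess against the level $D$ to reach $Q^{7/8}$ is the delicate part.
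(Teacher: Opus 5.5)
Your count of the square pairs is fine. An off-diagonal pair $dk^2\ne d\ell^2$ forces $4du<h^2$, so there are $\ll h+(h^2/u)\log h\ll h\log h$ such pairs, which is exactly the bound $R_2(h,u)\ll h\log h$ that the paper takes from Theorem~\ref{thm:short_bound}. The genuine gap is in the non-square part, and your very first inequality creates it. Once the absolute value is inside, $M_2\le\sum_{m,n}\bigl|\sum_p\left(\frac{mn}{p}\right)\bigr|$, you need for each individual pair a bound $\sum_{p\in[Q,2Q]}\chi(p)\ll Q^{7/8+o(1)}$ for a character of conductor up to $\asymp Q^2$. That is a sum over primes of length about the square root of the conductor. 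No unconditional bound of this strength is known: nontrivial estimates for character sums over primes (Vinogradov, Karatsuba) need length at least $q^{1/2+\varepsilon}$, and even then the saving is tiny. Your proposed rescue does not close this. The Selberg weights give only \emph{upper} bounds, so from $\#\{p:\chi(p)=1\}\le\frac12\Sigma+E$ and $\#\{p:\chi(p)=-1\}\le\frac12\Sigma+E$, with $\Sigma=\sum_{q\in[Q,2Q]}\sum_{e\mid q}\lambda_e^+$ of order $Q/\log Q$ and $E$ the Burgess error, nothing can be subtracted. You only recover $|\sum_p\chi(p)|\ll Q/\log Q$, and summed over $\asymp h^2$ pairs this is no better than the trivial bound $M_2\le h^2\pi(Q)$.

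The missing idea is to use positivity \emph{before} expanding. For every integer $q$, the quantity $|S_q(\balpha,u,h)|^2$ (with Kronecker symbols) is nonnegative. Also $\sum_{e\mid q}\lambda_e^+\ge1$ whenever $\gcd(q,\fP)=1$, in particular for primes $q\in[Q,2Q]$ when $z=Q^{\varepsilon}$. Hence $M_2\le\sum_{q\in[Q,2Q]}\bigl(\sum_{e\mid q}\lambda_e^+\bigr)|S_q|^2$, and only then is the square opened. Square pairs contribute at most $R_2(h,u)\Sigma\ll h\pi(Q)\log h$, because the weight is nonnegative. A non-square pair contributes, after reciprocity, $\sum_{e\le z^2}\lambda_e^+\left(\frac{e}{n_1n_2}\right)\sum_{Q/e\le v\le 2Q/e}\left(\frac{v}{n_1n_2}\right)$. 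This is a character sum over consecutive integers with no main term, and Burgess bounds it directly. Use $r=2$ there, which gives $(Q/e)^{1/2}(n_1n_2)^{3/16+o(1)}\le Q^{7/8+o(1)}e^{-1/2}$; your choice $r=3$ gives only $Q^{2/3}Q^{2/9}=Q^{8/9}$. No balancing of the sieve level is needed: $\sum_e|\lambda_e^+|\ll z^2=Q^{2\varepsilon}$, and one lets $\varepsilon\to0$. Finally, the identity $\left(\frac{n_1n_2}{q}\right)=\left(\frac{q}{n_1n_2}\right)$ for all $q$, including even or composite ones, needs $n_1n_2$ odd and $\equiv1\pmod 4$. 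The paper arranges this by first splitting $n$ by its $2$-adic valuation, at a cost of $O(h^{1/2})$ per prime, and then restricting $n$ to one residue class modulo $4$.
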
  

Theorem~\ref{thm: Bound M2Quh} gives essentially square-root cancellation
for almost all primes $p\in[Q,2Q]$ whenever
\[
\psi(Q)\le h\le Q^{1/8-\varepsilon},
\]
where $\psi(Q)\to\infty$ and $\varepsilon>0$ is fixed. For unweighted
sums, this implies that, for each prescribed interval in this range,
the quadratic residues and non-residues are asymptotically equidistributed
for almost all primes $p\in[Q,2Q]$.

Inspection of the proof, particularly Section~\ref{sec: Bound U}, shows
that the factor $Q^{7/8+o(1)}$ in the second term of
Theorem~\ref{thm: Bound M2Quh} may be replaced by
$Q^{1/2+o(1)}u^{3/8}$. This refinement is not needed here. The essential
feature of the theorem is the factor $\pi(Q)$, rather than the more
immediate factor $Q$, in the first term; this is what makes the estimate
nontrivial for very short sums. See Section~\ref{sec:comm}.

Higher moments are more sensitive to exceptionally large character sums.
It is therefore natural to study $M_{2s}(Q;\balpha,u,h)$ in pursuit of
nontrivial pointwise bounds for every $p$. Our higher-moment estimates are
conditional on the GRH.

 \begin{theorem}
\label{thm: Bound MsQuh}
Let $s\ge1$ be a fixed integer. Under the GRH, for all integers
$2\le h<u\le Q$, we have 
\[
M_{2s}(Q;\balpha,u,h)
\ll h^s(\log h)^{s(2s-1)}\pi(Q)+h^{2s}Q^{1/2+o(1)}.
\]
\end{theorem}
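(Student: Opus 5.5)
We expand the $2s$-th power and, for each $2s$-tuple $(n_1,\dots,n_{2s})\in(u,u+h]^{2s}$, collect the character $\ls{n_1\cdots n_{2s}}{p}$:
\[
M_{2s}(Q;\balpha,u,h)\le \sum_{n_1,\dots,n_{2s}}\Bigl|\sum_{p\in[Q,2Q]}\ls{n_1\cdots n_{2s}}{p}\Bigr|.
\]
We split the tuples into two classes. In the first, the product $N=n_1\cdots n_{2s}$ is a perfect square. In the second, $N$ is not a square.

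\textbf{Squares.} If $N$ is a square, then $\ls{N}{p}=1$ for every $p\ge Q>u+h$, because no $n_i$ is divisible by $p$. These tuples therefore contribute $\pi(2Q)\cdot T_{2s}(u,h)$, where $T_{2s}(u,h)$ counts the tuples in $(u,u+h]^{2s}$ with square product. Here we invoke the paper's sharp uniform estimate for this count, announced in the abstract:
\[
T_{2s}(u,h)\ll h^s(\log h)^{s(2s-1)},
\]
uniformly in $u\ge h$. Heuristically, the main contribution comes from the $(2s-1)!!$ diagonal pairings $n_i=n_j$. The off-diagonal tuples are controlled by the Selberg sieve, as in the second-moment case: we write $n_i=d_i m_i^2$ with $d_i$ squarefree and require $\prod d_i$ to be a square. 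Since all $n_i$ lie in a short interval, two equal squarefree parts with $m_i\ne m_j$ would force $|m_i^2-m_j^2|d\le h$, which sharply limits such configurations. The powers of $\log h$ come from summing over the shared squarefree factors. I take this count as given from the earlier combinatorial section, and it yields the first term of the bound.

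\textbf{Non-squares under GRH.} If $N$ is not a square, then $\chi(p)=\ls{N}{p}$ is, by quadratic reciprocity, a nonprincipal character of conductor at most $4N\le 4(2Q)^{2s}$, up to a factor depending on $p\bmod 4$. We handle that factor by splitting $p$ into residue classes mod $4$, or equivalently by twisting with the character mod $4$. Under GRH,
\[
\sum_{p\le x}\chi(p)\log p\ll x^{1/2}\log^2(xN),
\]
so partial summation gives
\[
\sum_{p\in[Q,2Q]}\ls{N}{p}\ll Q^{1/2}\log^2 Q.
\]
Summing trivially over at most $h^{2s}$ tuples produces $h^{2s}Q^{1/2+o(1)}$, the second term.

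The main obstacle is the uniform square-count bound $T_{2s}(u,h)\ll h^s(\log h)^{s(2s-1)}$, and in particular the exact power of the logarithm. Given that lemma, the GRH step is routine. The only care needed there is that $N$ is a non-square integer rather than a fundamental discriminant, which is handled by passing to the squarefree kernel of $N$. This affects only the conductor, and hence only $\log$ factors absorbed into $Q^{o(1)}$.
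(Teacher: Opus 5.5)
Your argument is correct, but it takes a more direct route than the paper. The paper first removes powers of $2$ and splits into the classes $n\equiv\pm1\pmod 4$, so that $n_1\cdots n_{2s}\equiv 1\pmod 4$. It then uses positivity of $|S_p|^{2s}$ to replace the sum over primes $p\in[Q,2Q]$ by a sum over all integers $q\in[Q,2Q]$, weighted by the Selberg majorant $\sum_{e\mid q}\lambda_e^+$. After quadratic reciprocity, the nonsquare terms become sums of $\left(\frac{v}{n_1\cdots n_{2s}}\right)$ over consecutive integers $v\in[Q/e,2Q/e]$, which are bounded by Lemma~\ref{lem:GRH}. The square terms cost $R_{2s}(h,u)\sum_{q}\sum_{e\mid q}\lambda_e^+\ll R_{2s}(h,u)\pi(Q)$.

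You instead keep the primes and apply GRH directly to $\sum_{Q\le p\le 2Q}\chi(p)$ with $\chi=\left(\frac{4N}{\cdot}\right)$. This is a real nonprincipal character modulo $4N$ that agrees with $p\mapsto\left(\frac{N}{p}\right)$ for odd $p$, so no splitting by $p$ modulo $4$ is actually needed. Your route drops the $2$-adic reduction, the classes modulo $4$ and the sieve. It even gives the second term as $h^{2s}Q^{1/2}\log Q$, whereas the paper loses the factor $\sum_e|\lambda_e^+|\ll z^2=Q^{2\varepsilon}$.

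What the paper's framework buys is that it runs in parallel with the unconditional case $s=1$ (Theorem~\ref{thm: Bound M2Quh}). There, no cancellation in sums over primes is available for moduli as large as $Q^2$. The sieve is exactly what turns the prime sum into a sum over an interval of integers, so that Burgess applies; under GRH that detour is unnecessary. Both proofs rest on the same input, Theorem~\ref{thm:short_bound} applied to $R_{2s}(h,u)$.

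Two harmless slips. First, $p\ge Q>u+h$ fails in general, since $u+h$ can be close to $2Q$; but for square $N$ the trivial bound $\bigl|\sum_p\left(\frac{N}{p}\right)\bigr|\le\pi(2Q)\ll\pi(Q)$ is all you need. Second, your heuristic sketch of the square count does not match the paper, which uses the Vaughan--Wooley factorization, Lemma~\ref{lem:pair-CS} and Shiu's bound rather than the Selberg sieve. Since you take Theorem~\ref{thm:short_bound} as given, this does not affect your proof.
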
  

For $s=1$, Theorem~\ref{thm: Bound MsQuh} strengthens
Theorem~\ref{thm: Bound M2Quh} under the GRH.

Both theorems rely on an estimate concerning the ``anatomy of integers''
that may be of independent interest.

To state this estimate, we use $\square$ to denote an unspecified perfect
square and
define the set 
\[
\cR_s(h,u)=\{(n_1,\ldots,n_s)\in\mathbb Z^s\cap(u,u+h]^s:
n_1\cdots n_s=\square\}.
\] 

Let $R_s(h,u)=\#\cR_s(h,u)$.  {We prove the following optimal estimate for the number of squares from short intervals.

\begin{theorem}\label{thm:short_bound}
Let $s\ge2$ be fixed. Uniformly for $u$ and $h \ge 2$, we have
\[
R_s(h,u)\ll h^{s/2}(\log h)^{s(s-1)/2}.
\]\ 
\end{theorem}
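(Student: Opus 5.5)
We write each tuple through the standard parametrisation of products that are squares. If $n_1\cdots n_s=\square$, then there are positive integers $d_{ij}=d_{ji}$ ($1\le i<j\le s$) and $a_1,\ldots,a_s$ with
\[
n_i=a_i^2D_i,\qquad D_i=\prod_{j\ne i}d_{ij}.
\]
Here we take the $d_{ij}$ squarefree, with the coprimality conditions that make the representation essentially unique. Up to a constant depending only on $s$, this is the usual decomposition of the squarefree kernels $k_i$ of $n_i$, where $\prod_i k_i=\square$ forces each prime dividing some $k_i$ to divide an even number of them. Then
\[
R_s(h,u)\ll\sum_{(d_{ij})}\prod_{i=1}^s N(D_i),
\qquad
N(D)=\#\{a\in\mathbb N:\ a^2D\in(u,u+h]\}.
\]
The heuristic identity that drives the bound is
\[
\prod_{i=1}^s\sqrt{h/D_i}=h^{s/2}\prod_{i<j}d_{ij}^{-1}.
\]
Summing this over $1\le d_{ij}\le h$ gives exactly $h^{s/2}(\log h)^{s(s-1)/2}$. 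So the whole task is to justify replacing $N(D_i)$ by $\sqrt{h/D_i}$ and restricting to $d_{ij}\le h$.

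\textbf{Step 1: equal coordinates, and restricting to $d_{ij}<h$.} We partition the tuples according to which coordinates coincide, and induct on $s$. A pair of equal coordinates $n_i=n_j$ contributes a square factor $n_i^2$. Removing the pair leaves a tuple of length $s-2$ whose product is a square, while the common value $n_i$ ranges over at most $h$ integers. Inductively this gives at most
\[
h\cdot h^{(s-2)/2}(\log h)^{(s-2)(s-3)/2},
\]
which is acceptable. (The base cases are $R_0=1$, and $R_1\le 1+\sqrt h$, since squares in an interval of length $h$ number at most this many.) Since coordinates may coincide in several pairs at once, the general pattern is handled the same way. So we may assume the $n_i$ are pairwise distinct. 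Then $d_{ij}$ divides both $n_i$ and $n_j$, hence divides $n_i-n_j\ne0$, and therefore $d_{ij}<h$.

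\textbf{Step 2: bounding $N(D)$.} We have
\[
N(D)\le 1+\sqrt{(u+h)/D}-\sqrt{u/D}\le 1+\min\bigl(\sqrt{h/D},\ h/(2\sqrt{uD})\bigr).
\]
When every $D_i\le h$, the $1$ is absorbed and we obtain the heuristic sum. The difficulty is the ``$+1$'' term when $D_i>h$: that coordinate is then essentially determined by $D_i$, but is not counted with the small weight $\sqrt{h/D_i}$.

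\textbf{Step 3: the main obstacle, removing the ``$+1$'' terms.} Here I would not bound each $N(D_i)$ independently. Instead I would count jointly, exploiting that all $n_i$ lie in one interval of length $h$. Fix $n_1$, or more precisely $D_1$ and $a_1$. For each $j$, the integer $n_j$ then lies in $(u,u+h]$, is divisible by $d_{1j}$, and satisfies $d_{1j}\mid n_j-n_1$. This leaves at most $1+h/d_{1j}$ choices, in addition to the square condition on $n_j/D_j$.

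I would combine two counts. The first is the ``divisibility'' count $h/\operatorname{lcm}$, which is good when the $D_i$ are large. The second is the ``square'' count $\sqrt{h/D_i}$, which is good when the $D_i$ are small. Taking the minimum, or a geometric mean of the two counts coordinate by coordinate, should recover the weight $\prod_{i<j}d_{ij}^{-1}$ times $h^{s/2}$, up to constants.

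For $s=2$ this is explicit. Off the diagonal, $n_1=da^2$ and $n_2=db^2$ with $d(b-a)(b+a)<h$. Summing
\[
\min\bigl(h/(dm),\ 1+h/\sqrt{du}\bigr)
\]
over $d$ and $m=b-a$ gives $\ll h\log h$, as required. For general $s$, I expect the bookkeeping of this min-argument over all pairs $(i,j)$ to be the delicate point. If necessary, I would order the coordinates and proceed greedily, fixing one coordinate at a time and using the divisibility constraint for large $d_{ij}$ and the square constraint otherwise.
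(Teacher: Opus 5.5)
Your set-up matches the paper's. That covers the parametrisation $n_i=a_i^2D_i$ with $D_i=\prod_{j\ne i}d_{ij}$, the reduction to pairwise distinct coordinates, and the bound $d_{ij}\mid n_i-n_j$, which gives $d_{ij}<h$. Your $s=2$ computation is also fine.

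But Step~3, which is the entire difficulty, is only an expectation for $s\ge3$, and the coordinatewise mechanism you propose cannot work. The valid versions of your two counts are $1+h/d_{1j}$ and $1+h/\sqrt{uD_j}$, and both are at least $1$. In the typical regime ($u$ large, so $D_j\approx u/a_j^2>h$), the weight you want to recover, $\sqrt{h/D_j}$, is less than $1$. So no minimum or geometric mean of such bounds can produce $h^{s/2}\prod_{i<j}d_{ij}^{-1}$.

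Concretely, take $s=3$. A pair of distinct $n_1,n_2$ essentially determines $d_{12},d_{13},d_{23}$. A greedy count is then a sum over $\asymp h^2$ pairs of $\#\{a:\ a^2d_{13}d_{23}\in(u,u+h]\}$. Bounding this inner count by $1+h/\sqrt{ud_{13}d_{23}}$ gives $h^2$ rather than $h^{3/2}(\log h)^3$. The ``$+1$'' terms can only be controlled on average. The point is that $\sum_{b_1,\ldots,b_m}D_{u,h}(b_1\cdots b_m)$ counts integers $n\in(u,u+h]$ with multiplicity at most $\tau_m(n)$.

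This averaging is exactly what the paper supplies.

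\textbf{Decoupling.} Lemma~\ref{lem:pair-CS}, a Finner-type Cauchy--Schwarz inequality for pair-indexed variables, bounds the majorant \eqref{eq:tuple-pair-majorant} by $\prod_r T_r^{1/2}$. Here $T_r=\sum_{b_1,\ldots,b_{s-1}\le h}D_{u,h}(b_1\cdots b_{s-1})^2$ involves only one coordinate.

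\textbf{Off-diagonal terms.} Expanding the square, pairs $a\ne c$ satisfy $b|a^2-c^2|<h$ with $b=b_1\cdots b_{s-1}$. They contribute $O(h/b)$, hence $O(h(\log h)^{s-1})$ in total.

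\textbf{Diagonal terms.} This is where your ``$+1$'' terms live. It is at most $\sum_{u<n\le u+h}\tau_{s-1}(n)\ll h(\log h)^{s-2}$ by Shiu's theorem.

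You also have no ingredient for uniformity in $u$. Shiu's bound only gives control of $\tau_{s-1}(n)$ averaged over $(u,u+h]$ when $u\le h^{O(1)}$. Your scheme depends on that average too, since it enumerates the factorisations $(d_{1j})_j$ of the kernel of a fixed $n_1$. The paper treats $u>C(s)h^{2s}$ separately in Lemma~\ref{lem:large-shift-distinct}. Using a polynomial approximation to $\sqrt{F(x)}$ with $F(X)=\prod_i(X+t_i)$, it shows that no even number of distinct integers in $(u,u+h]$ has a square product. Hence $R_r^*(h,u)$ is $0$ for even $r$ and at most $r!$ for odd $r$.

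By contrast, for $u\le h$ your direct approach already works, since then $N(D)\le\sqrt{(u+h)/D}$ with no ``$+1$''.
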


For initial intervals (that is, for $u=0$), the asymptotic formula 
\begin{equation}
\label{eq: Asymp}
R_s(h,0) \sim C_s h^{s/2}(\log h)^{s(s-1)/2}
\end{equation}
is known by~\cite[Corollary~2.4]{dlBMTKS}, see also~\cite[Lemma~15]{MuTo}.

\section{Preliminaries}

\subsection{Background on the Selberg sieve}
 
We recall the properties of the Selberg weights needed below. Our
formulation follows~\cite{OstShp}, which in turn draws
on~\cite[Section~3.2]{MonVau2}; see also~\cite{Harp}.
 
Let $z$ be a real parameter satisfying $z^3\le Q\le z^{O(1)}$, and define
\begin{equation}
\label{eq: PrimeProd}
\fP=\prod_{p\le z}p.
\end{equation}

There are real coefficients $\Lambda_d$, chosen as
in~\cite[Section~3.2]{MonVau2}, for which the associated Selberg weights
\[
\lambda_n^+=\sum_{\substack{r,s=1\\\lcm[r,s]=n}}^\infty
 \Lambda_r\Lambda_s,
\]
have the following properties:
\begin{itemize}
\item We have
\begin{equation}
\label{eq:lambda_n = 0}
\lambda_n^+=0,\qquad n\ge z^2.
\end{equation}
\item Equations~(3.17) and~(3.22) of~\cite{MonVau2}, together
with~\cite[Exercise~2.1.17]{MonVau2}, give
\begin{equation}\label{eq: sum|lambda_n|}
\sum_{n=1}^\infty|\lambda_n^+|\ll\frac{z^2}{(\log z)^2}.
\end{equation}
\item The definition of $\lambda_n^+$ and~\cite[Equations~(3.12)
and~(3.13)]{MonVau2} give
\begin{equation}\label{eq: sum lambda_e e|q}
\sum_{e\mid q}\lambda_e^+\ge
\begin{cases}
1,&\gcd(q,\fP)=1,\\
0,&\text{otherwise.}
\end{cases}
\end{equation}
where $\fP$ is defined by~\eqref{eq: PrimeProd};
\item The argument in the proof of~\cite[Theorem~3.2]{MonVau2}, with
the above choice of $\fP$ and the real coefficients $\Lambda_d$, shows
that, for every $Z$ satisfying $z^3\le Z\le z^{O(1)}$,
 \begin{equation}
\label{eq: sum lambda_n dyadic}
\begin{split}
\sum_{q\in[Z,2Z]}\sum_{e\mid q}\lambda_e^+
&=Z\sum_{e\le z^2}\frac{\lambda_e^+}{e}+O(z^2)\\
&=Z\sum_{e\le z^2}\frac{1}{e}
  \sum_{\substack{r,s\le z\\\lcm[r,s]=e}}\Lambda_r\Lambda_s+O(z^2)
 \ll\frac{Z}{\log Z},
 \end{split} 
\end{equation}
for the choice of the coefficients $\Lambda_d$ specified
in~\cite[Equations~(3.12) and~(3.15)]{MonVau2}.
\end{itemize}

\subsection{Character sums}
We first recall the following special case of the classical Burgess bound
for sums of Jacobi symbols with arbitrary odd moduli. It follows
from~\cite[Theorem~12.6]{IwKow} by taking $r=2$.

\begin{lemma}
\label{lem:Burg}
For every real $A$, every real $V\ge1$, and every odd nonsquare integer
$m\ge1$, we have
 \[
\left|\sum_{A\le v\le A+V}\ls{v}{m}\right|
 \le V^{1/2}m^{3/16+o(1)},\qquad m\to\infty.
\]
\end{lemma}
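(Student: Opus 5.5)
The plan is to reduce to a primitive character and then quote the Burgess bound as given in~\cite[Theorem~12.6]{IwKow}. That theorem states that for a primitive character $\chi$ modulo $q$, any $r\ge1$ (with no cube-free restriction on $q$ when $r\le 3$) and any interval of length $N$, one has $\sum_{M<n\le M+N}\chi(n)\ll N^{1-1/r}q^{(r+1)/(4r^2)+\varepsilon}$. Taking $r=2$ gives exactly $N^{1/2}q^{3/16+\varepsilon}$. So the only work is to pass from the Jacobi symbol modulo an arbitrary odd nonsquare $m$ to a primitive character.

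First, I write $m=m_0k^2$ with $m_0$ squarefree. Since $m$ is not a square, $m_0>1$, and $m_0$ is odd because $m$ is. By multiplicativity of the Jacobi symbol in the modulus,
\[
\ls{v}{m}=\ls{v}{m_0}\ls{v}{k}^2=\ls{v}{m_0}\mathbf 1_{\gcd(v,k)=1}.
\]
For odd squarefree $m_0>1$, the symbol $\ls{\cdot}{m_0}=\prod_{\ell\mid m_0}\ls{\cdot}{\ell}$ is a product of the nonprincipal quadratic characters to distinct odd prime moduli. It is therefore a primitive real character modulo $m_0$.

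Next I remove the coprimality condition with $k$ by M\"obius inversion:
\[
\sum_{A\le v\le A+V}\ls{v}{m}=\sum_{d\mid k}\mu(d)\ls{d}{m_0}\sum_{A/d\le w\le (A+V)/d}\ls{w}{m_0}.
\]
The inner sum runs over an interval of length $V/d$. By~\cite[Theorem~12.6]{IwKow} with $r=2$, it is at most $(V/d)^{1/2}m_0^{3/16+o(1)}\le V^{1/2}m^{3/16+o(1)}$. Summing over $d\mid k$ costs a factor $\tau(k)\le m^{o(1)}$ by the divisor bound~\cite[Equation~(1.81)]{IwKow}, which yields the claim.

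The only point needing care, rather than a real obstacle, is to check that the version of Burgess in~\cite[Theorem~12.6]{IwKow} applies for $r=2$ to every primitive modulus, not only cube-free ones, and for every interval length. If some range of $V$ were excluded there, I would cover it with the trivial bound $\le V+1$. I would also use $m_0^{3/16}\le m^{3/16}$ with the $o(1)$ taken as $m\to\infty$, so that when $m_0$ is small the implied constant is absorbed into the factor $m^{o(1)}$.
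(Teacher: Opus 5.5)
Your proposal is correct and is essentially the paper's argument, which consists solely of citing \cite[Theorem~12.6]{IwKow} with $r=2$. What you add is the reduction from $\left(\frac{\cdot}{m}\right)$, which need not be primitive when $m$ is not squarefree, to the primitive character $\left(\frac{\cdot}{m_0}\right)$ by M\"obius inversion over $d\mid k$, at a cost of $\tau(k)=m^{o(1)}$; the paper leaves this step implicit, and since $m_0$ is squarefree, the cube-free restriction you were worried about is automatically satisfied.
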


A stronger estimate is available under the GRH; see
~\cite[Section~1]{MonVau1}. It may also be deduced
from~\cite[Theorem~2]{GrSo}.

\begin{lemma}
\label{lem:GRH}
Assume the GRH. Then, for every real $T\ge1$ and every odd nonsquare
integer $m\ge1$, we have
\[
\left|\sum_{1\le t\le T}\ls{t}{m}\right|
 \le T^{1/2}m^{o(1)},\qquad m\to\infty.
\]
\end{lemma}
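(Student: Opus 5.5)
The plan is to derive the bound from the Lindelöf-type consequence of the GRH via a truncated Perron formula, after reducing to the range $T\le m$.

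\emph{Step 1: reduction to $T\le m$.} Since $m$ is odd and not a perfect square, $\chi(t)=\ls{t}{m}$ is a nonprincipal real character modulo $m$. Hence its sum over any complete period vanishes. We may therefore replace $T$ by its residue modulo $m$ and assume $1\le T\le m$. After this reduction any factor $T^{o(1)}$ or $(\log T)^{O(1)}$ is absorbed into $m^{o(1)}$.

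\emph{Step 2: passing to a primitive character.} Let $\chi^*$ be the primitive real character modulo $m'\mid m$ inducing $\chi$. It is nonprincipal, since otherwise $\chi$ would be principal. We have
\[
L(s,\chi)=L(s,\chi^*)\prod_{p\mid m}\bigl(1-\chi^*(p)p^{-s}\bigr).
\]
On $\Re s\ge 1/2$ the product is at most $\prod_{p\mid m}(1+p^{-1/2})\le 2^{\omega(m)}=m^{o(1)}$. Under the GRH, the standard argument (Littlewood's method, or the explicit bound $\log|L(s,\chi^*)|\ll \log(m(|t|+2))/\log\log(m(|t|+2))$) gives
\[
|L(\sigma+it,\chi^*)|\le (m(|t|+2))^{o(1)},\qquad \sigma\ge 1/2 .
\]

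\emph{Step 3: Perron's formula.} Take $c=1+1/\log T$ and $U=T$ (for bounded $T$ the claim is trivial). The truncated Perron formula gives
\[
\sum_{t\le T}\chi(t)=\frac{1}{2\pi i}\int_{c-iU}^{c+iU}L(s,\chi)\frac{T^s}{s}\,ds+O\bigl(T^{c}\log T/U+1\bigr).
\]
The error term is $O(\log T)$.

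\emph{Step 4: shifting the contour.} Move the line of integration to $\Re s=1/2$. There is no pole, since $\chi$ is nonprincipal. The horizontal segments contribute $\ll T\,(mU)^{o(1)}/U=(mT)^{o(1)}$. The vertical integral at $\Re s=1/2$ is
\[
\ll T^{1/2}\int_{-U}^{U}\frac{(m(|t|+2))^{o(1)}}{|t|+1}\,dt\ll T^{1/2}(mT)^{o(1)}.
\]
By Step 1 this is $T^{1/2}m^{o(1)}$, which proves the lemma.

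The only genuinely delicate input is the uniform GRH bound in Step 2. It must be uniform in the conductor and valid right on the critical line, so that $T^{1/2}$ is obtained without an extra $T^{\varepsilon}$. I would quote it in the Montgomery--Vaughan form cited in the statement's source; alternatively, one can integrate at $\Re s=1/2+1/\log(mT)$, where it follows from the standard GRH estimate for $\log L$.
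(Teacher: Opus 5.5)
Your argument is correct. It is the standard derivation: reduce to $T\le m$ by periodicity, invoke the GRH (Lindel\"of-type) bound $|L(\sigma+it,\chi)|\le (m(|t|+2))^{o(1)}$ for $\sigma\ge 1/2$, and shift the truncated Perron integral to $\Re s=1/2$. The paper gives no proof of its own; it cites Montgomery--Vaughan (Section~1) for this classical consequence of the GRH, with Granville--Soundararajan as an alternative source, so your proposal supplies the intended argument.
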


\subsection{Generalised  Cauchy--Schwarz inequality} 

We need the following Cauchy--Schwarz inequality for pair-indexed
variables. 
It is a special case of the Finner inequality~\cite{Finner}, we include an elementary proof for the sake of completeness.

\begin{lemma}
\label{lem:pair-CS}  
Let $m\geq2$.  For $1\leq i<j\leq m$, let $\cX_{ij}$ be a finite set,
and let $x_{ij}\in\cX_{ij}$ be a variable.  Put
$\cX_{ji}=\cX_{ij}$ and $x_{ji}=x_{ij}$. For $1\leq i\leq m$, define the Cartesian product and vector
\[
\cX_i=\prod_{\substack{1\leq j\leq m\\j\ne i}}\cX_{ij},
\qquad
\mathbf{x}_i=(x_{ij})_{\substack{1\leq j\leq m\\j\ne i}}\in\cX_i,
\]  
and let $F_i:\cX_i\longrightarrow[0,\infty)$.  Then
\[
\sum_{\substack{x_{ij}\in\cX_{ij}\\1\leq i<j\leq m}}
 \prod_{r=1}^m F_r(\mathbf{x}_r)
\leq
\prod_{r=1}^m
\left(
 \sum_{\mathbf{x}_r\in\cX_r}
 F_r(\mathbf{x}_r)^2
\right)^{1/2}.
\]
\end{lemma}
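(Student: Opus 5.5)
We prove Lemma~\ref{lem:pair-CS} by induction on $m$, using only the ordinary Cauchy--Schwarz inequality. Two conventions are used throughout. For $m=2$ there is a single variable $x_{12}$, and $\mathbf{x}_1=\mathbf{x}_2=(x_{12})$. Then the claim is exactly Cauchy--Schwarz:
\[
\sum_{x_{12}}F_1(x_{12})F_2(x_{12})\le\Bigl(\sum F_1^2\Bigr)^{1/2}\Bigl(\sum F_2^2\Bigr)^{1/2}.
\]

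For the inductive step, assume the statement holds for $m-1\ge2$ and fix the instance with $m$ indices. We isolate the variables involving index $m$, namely $\mathbf{x}_m=(x_{1m},\dots,x_{m-1,m})$. Write the left-hand side as
\[
\sum_{\mathbf{x}_m\in\cX_m}F_m(\mathbf{x}_m)\,G(\mathbf{x}_m),\qquad
G(\mathbf{x}_m)=\sum_{\substack{x_{ij}\\1\le i<j\le m-1}}\prod_{r=1}^{m-1}F_r(\mathbf{x}_r),
\]
where in $G$ the coordinates $x_{rm}$ are frozen.

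Apply Cauchy--Schwarz in $\mathbf{x}_m$. This gives the bound
\[
\Bigl(\sum_{\mathbf{x}_m}F_m^2\Bigr)^{1/2}\Bigl(\sum_{\mathbf{x}_m}G(\mathbf{x}_m)^2\Bigr)^{1/2},
\]
so it remains to show that
\[
\sum_{\mathbf{x}_m}G(\mathbf{x}_m)^2\le\prod_{r<m}\sum_{\mathbf{x}_r}F_r(\mathbf{x}_r)^2 .
\]

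For frozen $\mathbf{x}_m$, define $F_r'$ on $\prod_{j\ne r,\,j<m}\cX_{rj}$ by $F_r'(\cdot)=F_r(\cdot,x_{rm})$. The induction hypothesis with $m-1$ indices then gives
\[
G(\mathbf{x}_m)\le\prod_{r<m}A_r(x_{rm})^{1/2},\qquad
A_r(x_{rm})=\sum_{x_{rj},\,j\ne r,m}F_r(\mathbf{x}_r)^2 .
\]
Hence
\[
G(\mathbf{x}_m)^2\le\prod_{r<m}A_r(x_{rm}),
\]
and each factor depends on a different coordinate $x_{rm}$ of $\mathbf{x}_m$. The sum over $\mathbf{x}_m\in\prod_{r<m}\cX_{rm}$ therefore factorises:
\[
\sum_{\mathbf{x}_m}\prod_{r<m}A_r(x_{rm})=\prod_{r<m}\sum_{x_{rm}}A_r(x_{rm})=\prod_{r<m}\sum_{\mathbf{x}_r}F_r(\mathbf{x}_r)^2 .
\]
This completes the induction.

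The only delicate point is the bookkeeping: one has to check that freezing $\mathbf{x}_m$ really produces an instance of the lemma with $m-1$ indices. This holds because, after removing index $m$, each remaining $F_r$ depends on exactly the pair variables $x_{rj}$ with $j\le m-1$, which is the required structure. The factorisation in the last step is what makes the squared bound close up without loss.
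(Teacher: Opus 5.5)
Your proof is correct. It follows the paper's outer structure (induction on $m$, isolating $\mathbf{x}_m$, and one Cauchy--Schwarz to reach $\|F_m\|_2\|G\|_2$), but you bound $\|G\|_2$ by a different and simpler route.

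The paper first expands $G(\mathbf{y})^2$ and sums over $\mathbf{y}$. This produces a new pair-indexed sum over doubled variables $w_{ij}=(z_{ij},z_{ij}^*)\in\cX_{ij}\times\cX_{ij}$, with functions $H_r(\mathbf{w}_r)=\sum_{t}F_r(\mathbf{z}_r,t)F_r(\mathbf{z}_r^*,t)$. It applies the induction hypothesis to this doubled system. It then needs one further Cauchy--Schwarz for each $r$ to show $\|H_r\|_2\le\|F_r\|_2^2$.

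You instead apply the induction hypothesis fibrewise: for each frozen $\mathbf{x}_m$, you use it on the undoubled system $F_r(\cdot,x_{rm})$. You then observe that the majorant $\prod_{r<m}A_r(x_{rm})$ is a product of functions of distinct coordinates of $\mathbf{x}_m$. Summing over $\mathbf{x}_m$ therefore gives exactly $\prod_{r<m}\|F_r\|_2^2$, with no further loss.

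Your version is shorter and avoids both the auxiliary sets $\widetilde{\cX}_{ij}$ and the extra inequality. The paper's version works only with the integrated quantity $\|G\|_2^2$ and never needs a pointwise bound on $G$, but here that gains nothing.

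A cosmetic point: you announce two conventions but state only one.
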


\begin{proof}
We argue by induction on $m$.  For $m=2$, this is the classical Cauchy--Schwarz inequality.

Assume $m\geq3$ and that the result is known with $m$ replaced by
$m-1$.  For
$1\leq i<j\leq m-1$, write
\[
 z_{ij}=x_{ij},\qquad z_{ji}=z_{ij},
\]
and, for $1\leq i\leq m-1$, put
\[
\cZ_i=\prod_{\substack{1\leq j\leq m-1\\j\ne i}}\cX_{ij},
\qquad
\mathbf{z}_i=(z_{ij})_{\substack{1\leq j\leq m-1\\j\ne i}}
\in\cZ_i,
\qquad
y_i=x_{im}\in\cX_{im}.
\]
Thus, $\mathbf{z}_i$ collects the variables $z_{ij}$ with
$1\leq j\leq m-1$ and $j\ne i$. 
We have 
\[
\cX_i=\cZ_i\times\cX_{im}\quad (i<m),  \mand 
\cX_m=\prod_{r=1}^{m-1}\cX_{rm}. 
\]
Next, we set 
\[
\mathbf{y}=(y_1,\ldots,y_{m-1})
\in\cX_m,
\qquad
\mathbf{x}_m=\mathbf{y},
\]
and
\[
 G(\mathbf{y})
 =
 \sum_{\substack{z_{ij}\in\cX_{ij}\\1\leq i<j\leq m-1}}
 \prod_{r=1}^{m-1}F_r(\mathbf{z}_r,y_r).
\]
The original sum is
\[
\sum_{\substack{x_{ij}\in\cX_{ij}\\1\leq i<j\leq m}}
 \prod_{r=1}^m F_r(\mathbf{x}_r) = \sum_{\mathbf{y}\in \cX_{m}}
 F_m(\mathbf{y})G(\mathbf{y}).
\]
The classical Cauchy--Schwarz inequality in the variables $y_1,\ldots,y_{m-1}$ gives
\begin{equation}
 \label{eq:first-CS}
\sum_{\mathbf{y}\in \cX_{m}}
 F_m(\mathbf{y})G(\mathbf{y})
 \leq \|F_m\|_2\,\|G\|_2
\end{equation}
where 
\[
\|F_m\|_2 = \( \sum_{\mathbf{y}\in \cX_{m}}
 F_m(\mathbf{y})^2\)^{1/2} \mand \|G\|_2 =  \( \sum_{\mathbf{y}\in \cX_{m}}
G(\mathbf{y})^2\)^{1/2}.
\] 
To estimate $\|G\|_2$, for $1\leq i<j\leq m-1$ , we put
\[
\widetilde{\cX}_{ij}=\cX_{ij}\times\cX_{ij},
\qquad
\widetilde{\cX}_{ji}=\widetilde{\cX}_{ij}.
\]
Write
\[
w_{ij}=(z_{ij},z_{ij}^*)\in\widetilde{\cX}_{ij},
\qquad
w_{ji}=w_{ij}.
\]
Thus $z_{ji}^*=z_{ij}^*$.  For $1\leq r\leq m-1$, put
\[
\widetilde{\cX}_r
 =\prod_{\substack{1\leq j\leq m-1\\j\ne r}}
   \widetilde{\cX}_{rj},
\qquad
\mathbf{w}_r
 =(w_{rj})_{\substack{1\leq j\leq m-1\\j\ne r}}
 \in\widetilde{\cX}_r,
\]
and set
\[
\mathbf{z}_r^*
 =(z_{rj}^*)_{\substack{1\leq j\leq m-1\\j\ne r}}\in\cZ_r.
\]
Under the natural interpretations 
\[
\widetilde{\cX}_r =  \cZ_r\times\cZ_r,
\qquad
\mathbf{w}_r = (\mathbf{z}_r,\mathbf{z}_r^*),
\]
define $H_r: \widetilde{\cX}_r\longrightarrow[0,\infty)$ by
\[
H_r(\mathbf{w}_r)
 =
\sum_{t\in\cX_{rm}}
F_r(\mathbf{z}_r,t)F_r(\mathbf{z}_r^*,t).
\]
Expanding the square of $\|G\|_2$ now gives
\[
 \|G\|_2^2
 =
 \sum_{\substack{w_{ij}\in\widetilde{\cX}_{ij}\\
                  1\leq i<j\leq m-1}}
 \prod_{r=1}^{m-1}H_r(\mathbf{w}_r).
\]
The induction hypothesis, applied to the sets
$\widetilde{\cX}_{ij}$ and the functions $H_r$, yields
\[
 \|G\|_2^2
 \leq
 \prod_{r=1}^{m-1}
 \left(
 \sum_{\mathbf{w}_r\in\widetilde{\cX}_r}
 H_r(\mathbf{w}_r)^2
 \right)^{1/2}.
\]
For each $1\leq r\leq m-1$, another application of the Cauchy--Schwarz inequality 
gives
\[
 H_r(\mathbf{w}_r)^2
 \leq
 \left(\sum_{t\in\cX_{rm}}F_r(\mathbf{z}_r,t)^2\right)
 \left(\sum_{t\in\cX_{rm}}F_r(\mathbf{z}_r^*,t)^2\right).
\]
After summing over $\mathbf{w}_r\in\widetilde{\cX}_r$, we obtain
\[
 \left(
 \sum_{\mathbf{w}_r\in\widetilde{\cX}_r}
 H_r(\mathbf{w}_r)^2
 \right)^{1/2}
 \leq
 \sum_{\substack{\mathbf{z}_r\in\cZ_r\\t\in\cX_{rm}}}
 F_r(\mathbf{z}_r,t)^2
 =\|F_r\|_2^2.
\]
Thus $\|G\|_2\leq\prod_{r=1}^{m-1}\|F_r\|_2$.  Substitution into
\eqref{eq:first-CS} completes the induction.
\end{proof}

\section{Square products from shifted intervals}

\subsection{Reduction to pairwise distinct integers}
We first separate the contribution of tuples containing repeated
coordinates. Put
\[
\cR_s^*(h,u)
 =\{(n_1,\ldots,n_s)\in\cR_s(h,u):
      n_i\ne n_j,\ \text{for }i\ne j\},
\]
and let $R_s^*(h,u)=\#\cR_s^*(h,u)$.

\begin{lemma}\label{lem_reduction}
For every fixed integer $s\ge 1$, uniformly for  $u$ and $h\ge 2$, we have
\[
R_s(h,u)
 \ll h^{s/2}
 +\sum_{j=0}^{\lfloor (s-1)/2\rfloor}h^jR_{s-2j}^*(h,u).
\] 
\end{lemma}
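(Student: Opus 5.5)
We sort the tuples in $\cR_s(h,u)$ by the pattern of equalities among their coordinates and cancel repeated values in pairs, because the square of an integer does not affect whether a product is a perfect square. Given $(n_1,\ldots,n_s)\in\cR_s(h,u)$, group the indices into classes of equal coordinate values. A value of multiplicity $m$ contributes $n^m$ to the product. Reducing each multiplicity modulo $2$, the tuple is square exactly when the product of the values with odd multiplicity is a square. The pattern, meaning the set partition of $\{1,\ldots,s\}$, ranges over $O_s(1)$ possibilities. It therefore suffices to bound each pattern separately by one of the terms on the right-hand side.

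Fix a partition. Let $k$ be the number of classes of odd size and $t$ the number of classes of even size, so that the $k+t$ classes take pairwise distinct values. The $k$ values on the odd classes must form a tuple in $\cR_k^*(h,u)$; when $k=0$ we interpret $R_0^*=1$. The $t$ values on the even classes are essentially free, in at most $h^t$ ways. This gives at most $h^tR_k^*(h,u)$ tuples for this pattern. Counting indices, $s\ge k+2t$ and $s\equiv k\pmod 2$, so we may write $k=s-2j$ with $t\le j$. Hence $h^tR_k^*(h,u)\le h^jR_{s-2j}^*(h,u)$, using $h\ge2$.

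Now compare with the allowed range of $j$. When $k\ge1$, we have $j=(s-k)/2\le (s-1)/2$, so $j\le\lfloor (s-1)/2\rfloor$ and the pattern is covered by the sum. When $k=0$, which forces $s$ to be even and every class to have even size, we get $t\le s/2$ and the contribution is at most $h^{s/2}$. This is the first term on the right-hand side. Summing over the $O_s(1)$ partitions gives the lemma.

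The only delicate point is bookkeeping. In particular, the odd-class values must be pairwise distinct, which is exactly what membership in $\cR_k^*$ requires, and the case $k=0$ is what produces the separate $h^{s/2}$ term. We also drop the distinctness constraint between even-class values and odd-class values, which is harmless because we only need an upper bound. No real analytic obstacle arises.
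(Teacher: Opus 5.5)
Your proof is correct and is essentially the paper's argument. The paper removes one equal pair of coordinates at a time, obtaining $R_s(h,u)\le R_s^*(h,u)+O(hR_{s-2}(h,u))$ and iterating down to $R_1=R_1^*$ or $R_0=1$. You instead classify by the full equality pattern and cancel all pairs at once, which produces the same terms $h^jR^*_{s-2j}(h,u)$ and the extra $h^{s/2}$ when $s$ is even. One small point: the even-class values contribute $O(h^t)$ rather than at most $h^t$, since $(u,u+h]$ can contain up to $h+1$ integers, but this is absorbed into the implied constant.
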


\begin{proof}
If a tuple counted by $R_s(h,u)$ is not pairwise distinct, then two
of its coordinates, say $n_i$ and $n_j$, are equal. There are
$O(1)$ choices for the pair $(i,j)$ and $O(h)$ choices for their
common value. Removing these two coordinates preserves the condition
that the product is a square. Hence
\[
R_s(h,u)\le R_s^*(h,u)+O\(hR_{s-2}(h,u)\).
\]
Iterating this inequality gives the result, with the term $h^{s/2}$
accounting for the case in which all coordinates are removed in
pairs.
\end{proof}

Next, we estimate the number of pairwise distinct solutions. Together with
Lemma~\ref{lem_reduction}, the following bound yields
Theorem~\ref{thm:short_bound},  after noticing that in the case $h\ge u$ 
the result is immediate by~\eqref{eq: Asymp}.

\begin{prop}\label{bound_nontrivial}
Let $s\ge1$ be fixed. Uniformly for $2\le h<u$, we have
\[
R_s^*(h,u)\ll h^{s/2}(\log h)^{s(s-1)/2}.
\]
\end{prop}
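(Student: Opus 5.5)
We will parametrise square products of pairwise distinct integers via the standard ``pairwise gcd'' decomposition and then count with the generalised Cauchy--Schwarz inequality of Lemma~\ref{lem:pair-CS}. Write each $n_i$ as $n_i=a_i m_i^2$ with $a_i$ squarefree. The condition $n_1\cdots n_s=\square$ is equivalent to $a_1\cdots a_s=\square$. For squarefree $a_i$ this holds if and only if every prime divides an even number of the $a_i$. We then factor each $a_i$ as a product over pairs. Concretely, we will use the classical parametrisation: there exist pairwise coprime squarefree integers $x_{ij}=x_{ji}$ ($i\ne j$) such that
\[
a_i=\prod_{j\ne i}x_{ij}
\]
up to the (bounded in number, for fixed $s$) ambiguity coming from primes dividing four or more of the $a_i$. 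To avoid that ambiguity we may instead fix a canonical choice, e.g.\ pairing the primes dividing a given set of $a_i$'s in a fixed order. The map from tuples to data $(x_{ij})$, $(m_i)$ is then injective. So $R_s^*(h,u)$ is at most the number of $(x_{ij})_{i<j}$, $(m_i)$ with $u<m_i^2\prod_{j\ne i}x_{ij}\le u+h$.

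Set $F_i(\mathbf x_i)=\#\{m:\ u<m^2\prod_{j\ne i}x_{ij}\le u+h\}$, so that the count is exactly $\sum_{(x_{ij})}\prod_i F_i(\mathbf x_i)$. Lemma~\ref{lem:pair-CS} then bounds it by $\prod_i\big(\sum_{\mathbf x_i}F_i(\mathbf x_i)^2\big)^{1/2}$.

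It therefore suffices to show that for each $i$
\[
\sum_{\mathbf x_i}F_i(\mathbf x_i)^2\ll h(\log h)^{s-1}.
\]
Taking the product of square roots then gives $h^{s/2}(\log h)^{s(s-1)/2}$. Writing $a=\prod_{j\ne i}x_{ij}$, each squarefree $a$ arises from at most $\tau_{s-1}(a)$ vectors $\mathbf x_i$. Hence
\[
\sum_{\mathbf x_i}F_i^2\le\sum_{a}\tau_{s-1}(a)\,\#\{(m,m'):\ u<am^2,am'^2\le u+h\}.
\]
The diagonal $m=m'$ counts integers $n=am^2\in(u,u+h]$ weighted by $\tau_{s-1}(a)\le\tau_{s-1}(n)$. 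By Shiu's theorem for short intervals this gives $\ll h(\log h)^{s-2}$ when $h\ge u^{\varepsilon}$. When $h$ is much smaller we need another argument; see below.

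The off-diagonal pairs $m\ne m'$ force $a(m'^2-m^2)\le h$, with $am^2\approx u$, so $m'-m\le h/(a\cdot 2m)\approx h/(2\sqrt{au})$. Such pairs exist only if $a\le h^2/u<h$ roughly, and then they number $\ll h/\sqrt{au}\cdot \#\{m\}$. The total should be $\ll h$ times logs, since $u>h$.

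The hardest step is proving the diagonal bound $\sum_{u<n\le u+h}\tau_{s-1}(a(n))\ll h(\log h)^{s-2}$ uniformly for very short intervals ($h\le u^{\varepsilon}$), where Shiu's theorem fails. Here we would exploit the pairwise-distinct structure in a different way. Split according to the size of $m$. If $m\le h$, then distinct $n=am^2$ in the interval with fixed $m$ number at most $h/m^2+1$, and for each such $n$ we use $\tau_{s-1}(a)$ only through the pairs $x_{ij}$ that actually occur.

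Alternatively, we avoid divisor weights altogether by applying Lemma~\ref{lem:pair-CS} with $\cX_{ij}$ restricted to dyadic ranges of $x_{ij}$. In a dyadic box with sizes $X_{ij}$, the count of $(x,m)$ is at most $\prod_{j}$ (box volume)$\cdot(h/\sqrt{u\prod X})+\dots$. The short interval still forces each $a_i$, given $m_i$, into a window of length $h/m_i^2$. Summing over dyadic boxes produces exactly the $(\log h)^{s(s-1)/2}$: one logarithm per pair $i<j$, provided the $x_{ij}$ can be confined to $\le h^{O(1)}$ distinct dyadic ranges relative to each other.

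That confinement is the key point to verify. It uses that the $n_i$ all lie in $(u,u+h]$, so $n_i/n_j\in[1-h/u,1+h/u]$, together with distinctness: for $n_i\ne n_j$, $\gcd(n_i,n_j)\le h$, and $x_{ij}\mid\gcd(a_i,a_j)$, so $x_{ij}\le h$. We would try this route first.
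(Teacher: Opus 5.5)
Your framework is the paper's: the pair decomposition $n_i=m_i^2\prod_{j\ne i}x_{ij}$ with squarefree $x_{ij}\le h$ (since $x_{ij}\mid n_i-n_j\ne0$), then Lemma~\ref{lem:pair-CS}, the reduction to $\sum_{\mathbf x_i}F_i^2\ll h(\log h)^{s-1}$, and Shiu's theorem for the diagonal. Your off-diagonal estimate using $u>h$ is fine. Two minor points: the $x_{ij}$ cannot in general all be pairwise coprime, since a prime dividing $a_1,\dots,a_4$ goes into, e.g., both $x_{12}$ and $x_{34}$; and Lemma~\ref{lem:pair-CS} needs product domains, so you should sum over all squarefree $x_{ij}\le h$ as a majorant rather than claim equality. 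Neither matters.

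The genuine gap is the regime where $u$ exceeds every fixed power of $h$. You flag it but leave it open, and neither fix can close it, because there the Cauchy--Schwarz majorant itself is too large. Take $N=\prod_{p\le h}p$ and $u=N^2-1$, so $N^2\in(u,u+h]$. Let $s=5$, fix $i$, and call the other indices $j_1,\dots,j_4$. Put $x_{ij_1}=x_{ij_2}=t$ and $x_{ij_3}=x_{ij_4}=t'$ with $t,t'\le h$ squarefree and coprime. Then $m=N/(tt')$ gives $m^2\prod_{j\ne i}x_{ij}=N^2$, so $F_i(\mathbf x_i)\ge1$ for $\gg h^2$ vectors $\mathbf x_i$. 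Hence $\sum_{\mathbf x_i}F_i^2\gg h^2$, and Lemma~\ref{lem:pair-CS} returns a bound $\gg h^5$, which is worthless. Taking $t,t'$ in one dyadic range near $h$ shows the same inside a single dyadic box, so your second route also fails. The ``$+1$'' per vector $\mathbf x_i$, summed over a box of volume up to $h^{s-1}$, is exactly what size considerations cannot control.

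The missing idea is an arithmetic input showing such $u$ are irrelevant; the paper supplies it in Lemma~\ref{lem:large-shift-distinct}. For $u>C(s)h^{2s}$, no product of an even number $2k\le2s$ of distinct integers $x+t_1,\dots,x+t_{2k}$ from $(u,u+h]$ is a square. To see this, write $\prod_i(X+t_i)=P(X)^2-Q(X)$ with $2^{2k-1}P\in\Z[X]$. Here $Q\ne0$ because the $t_i$ are distinct, $\deg Q\le k-1$, and $Q$ has coefficients $O(h^{2k})$. A relation $\prod_i(x+t_i)=y^2$ then forces $|P(x)-y|\ll h^{2k}/x<2^{1-2k}$, hence $P(x)=y$ and $Q(x)=0$, contradicting the root bound $x\ll h^{2k}$. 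For odd $r$, two distinct $r$-sets with square products would give, via their symmetric difference, an even set of size at most $2s$ with square product, so $R_r^*(h,u)\le r!$. Thus the proposition is trivial for $u>C(s)h^{2s}$. For $u\ll h^{2s}$ one has $h\gg u^{1/(2s)}$, Shiu's theorem gives the diagonal bound $h(\log h)^{s-2}$, and your argument then goes through as in the paper.
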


\subsection{Small values of $h$}

We first handle the small values of $h$ in 
Proposition~\ref{bound_nontrivial}.  

\begin{lemma}
\label{lem:large-shift-distinct}
For every fixed integer $s\geq1$, there exists a constant $C(s)>0$ with
the following property.  If $2\leq h<u$ and $u>C(s) h^{2s}$, then, for
every $1\leq r\leq s$,
\[
R_r^*(h,u)=0,\quad \text{if $r$ is even,} \mand 
R_r^*(h,u)\leq r!,\quad\text{if $r$ is odd}.
\]
\end{lemma}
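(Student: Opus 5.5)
The plan is to compare the product $n_1\cdots n_r$, viewed as the value of a polynomial in $u$, with the square of a polynomial of half the degree. The even case will then be a ``distance to the nearest square'' argument. The odd case reduces to the even case by taking symmetric differences. I will prove the even statement for every even $m\le 2s$, which is more than the lemma asks; this is needed for the odd case.

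\emph{Even case.} Let $m\le 2s$ be even and let $n_i=u+t_i$ with distinct integers $t_i\in(0,h]$, $1\le i\le m$. Put
\[
f(x)=\prod_{i=1}^m(x+t_i)\in\Z[x].
\]
Expanding $f^{1/2}=x^{m/2}\bigl(1+\sum_i t_i/x+\cdots\bigr)^{1/2}$ formally and truncating gives $g\in\mathbb{Q}[x]$, monic of degree $m/2$, with
\[
R=f-g^2,\qquad \deg R\le m/2-1.
\]
The coefficients of $g$ have denominators dividing $D=2^{O(m)}$. By homogeneity in the $t_i$, the coefficient of $x^j$ in $R$ has size $\ll h^{m-j}$. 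Moreover $R\ne0$, since $f$ has distinct roots and so is not the square of a polynomial.

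Suppose $f(u)=y^2$ with $y\ge0$. Then $D^2y^2=(Dg(u))^2+D^2R(u)$, where $Dg(u)\in\Z$ and $Dg(u)\asymp u^{m/2}$. Also $|D^2R(u)|\ll h^{m/2+1}u^{m/2-1}$. Once $u$ exceeds a suitable constant times $h^{m/2+1}$, this is smaller than $2Dg(u)-1$. Hence $Dy=Dg(u)$, which forces $R(u)=0$.

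It remains to rule out $R(u)=0$ for large $u$. Let $j_0$ be the degree of $R$. Its leading coefficient is a nonzero rational with denominator dividing $D^2$, so it is at least $D^{-2}$ in absolute value. The Fujiwara bound then shows every root of $R$ is at most
\[
2\max_{j<j_0}\bigl(D^2 h^{m-j}\bigr)^{1/(j_0-j)}\ll h^{m}\le h^{2s}.
\]
Choosing $C(s)$ large enough, $u>C(s)h^{2s}$ therefore gives $R(u)\ne0$, a contradiction. So $R^*_m(h,u)=0$ for all even $m\le 2s$.

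\emph{Odd case.} Suppose two distinct sets $S,T\subset(u,u+h]$ of odd size $r\le s$ both have square product. Then the product over the symmetric difference $S\triangle T$ is also a square, because
\[
\prod_{S}\prod_{T}=\prod_{S\triangle T}\cdot\Bigl(\prod_{S\cap T}\Bigr)^2 .
\]
The set $S\triangle T$ is nonempty and has even size at most $2s$, and its elements are distinct. This contradicts the even case. Hence at most one $r$-element set qualifies, and $R_r^*(h,u)\le r!$.

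The main obstacle is the bookkeeping in the even case: making sure the denominators of $g$ stay bounded, and that both thresholds (the gap argument and the root bound for $R$) come out polynomial in $h$ with exponent at most $2s$. Everything else is formal.
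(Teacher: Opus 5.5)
Your proof is correct and follows the paper's argument step for step: the truncated polynomial square root $g$ with power-of-two denominators, the gap-between-squares argument forcing $R(u)=0$, a root bound of order $h^{m}\le h^{2s}$ for the nonzero remainder, and the symmetric-difference reduction of the odd case to even sizes up to $2s$. The one thing to adjust is that $u$ need not be an integer, so your claims $f(u)=y^2$ with $f\in\Z[x]$ and $Dg(u)\in\Z$ do not apply as written; as the paper does, expand around $x=\lfloor u\rfloor$ with $t_i\in[1,h+1]$, and none of your bounds change.
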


\begin{proof}
We first prove an elementary claim for an even number of variables.
For every fixed even integer $r=2k\geq2$, there is a constant $c(r)>0$
such that, if $u>c(r)h^r$, there are no $r$ pairwise distinct integers in
$(u,u+h]$ with a square product.

Indeed, let $x=\lfloor u\rfloor$ and, after reordering, write the
integers as
\[
 x+t_1,\ldots,x+t_{2k}, \qquad 1\leq t_1<\cdots<t_{2k}\leq h+1.
\]
Set
\[
 F(X)=\prod_{i=1}^{2k}(X+t_i)  =X^{2k}+c_1X^{2k-1}+\cdots+c_{2k}.
\]
Since $c_j$ is an elementary symmetric polynomial of degree $j$ in
the variables $t_i$, we have $|c_j| \ll h^j$ where, until the end of the proof, the implied constant may depend on $r$ (or equivalently, on $k$). Expanding $\sqrt{F(X)}$ in a power series, we see that there is a unique monic
polynomial
\[
 P(X)=X^k+a_1X^{k-1}+\cdots+a_k\in\mathbb Q[X]
\]
such that
\[
 Q(X)=P(X)^2-F(X)
\]
has degree at most $k-1$.  
Observe that $t_i$ being distinct, $F(X)$ is a squarefree polynomial, which implies that $Q(X)$ is a non-zero polynomial.

Comparing the coefficient of
$X^{2k-j}$ gives
\[
 2a_j=c_j-\sum_{i=1}^{j-1}a_i a_{j-i}
 \qquad(1\leq j\leq k).
\]
Since the coefficients $c_j$ are integers, a straightforward induction shows that 
\begin{equation}
\label{eq: aj}
 a_j\ll  h^j \mand 2^{2j-1}a_j\in\mathbb Z, 
 \qquad(1\leq j\leq k).
\end{equation}

Thus, we may take $D_k=2^{2k-1}$ as a common denominator of the $a_j$'s;
in particular, $D_kP\in\mathbb Z[X]$.
Consequently,
\[
 |Q(x)|\ll  h^{2k}x^{k-1}.
\]

Suppose that $F(x)=y^2$ for some integer $y>0$. Since $c_j\ll h^j$, $1 \le j \le 2k$, 
 $x \gg h$  we have $F(x) = x^{2k} + O(h x^{2k-1})$. Similarly, from~\eqref{eq: aj} we have 
$P(x)=x^k  + O(h x^{k-1})$. Therefore, $P(x)+y\gg x^k$, provided that $h$ is large enough, and 
we see that
\[
 |P(x)-y| =\frac{|Q(x)|}{P(x)+y}  \ll h^{2k} x^{-1}.
\]
If 
\begin{equation}
\label{eq: small x}
x \ge c(r) h^{2k} =c(r)  h^r
\end{equation}
with some sufficiently large constant  $c(r) >0$, 
we conclude that $ |P(x)-y| < 1/D_k$.  Since
$D_k(P(x)-y)$ is an integer, we infer $P(x)=y$ and therefore $Q(x)=0$.

However, $D_k^2Q$ is
a nonzero polynomial with integer coefficients of size $O(h^{2k})$, and the elementary root bound gives
$x  \ll h^{2k}$.  Thus, increasing, if necessary, the value of the constant $c(r)$ in~\eqref{eq: small x}
we obtain a contradiction. 

This proves the claim for all even $r$.

Suppose now that $r$ is odd, and consider two distinct unordered
$r$-element subsets $\cS, \cT \subseteq \Z\cap(u,u+h]$ for which 
\[
\prod_{n\in \cS}n \mand \prod_{n\in \cT}n
\] are squares.
Then their symmetric difference 
$\cS\mathbin{\triangle}\cT$ is not empty and, 
\[
 \prod_{n\in \cS\mathbin{\triangle}\cT}n
 =
 \frac{\left(\prod_{n\in \cS}n\right)
       \left(\prod_{n\in \cT}n\right)}
      {\left(\prod_{n\in \cS\cap \cT}n\right)^2}
\]
is a perfect square.   Put $t=\# \(\cS\mathbin{\triangle}\cT\)$. Then $t$ is even
and $2\leq t\leq 2s$. Taking
\[
C(s)=\max_{\substack{2\leq j\leq 2s\\ j\ {\rm even}}} c(j),
\]
the hypothesis $u>C(s)h^{2s}$ implies $u>c(t)h^t$, contradicting
the above even-variable claim. 
\end{proof}

Consequently, in proving   Proposition~\ref{bound_nontrivial} we may now assume $u\leq h^A$, where $A>0$ is sufficiently
large and depends only on $s$.

\subsection{A squarefree-kernel estimate}
For a positive integer $n$, let $\sq(n)$ denote its largest square
divisor.

\begin{lemma}
If positive integers $n_1,\ldots,n_\ell$ satisfy
\[
n_1\cdots n_\ell=\square,
\]
then there is an upper triangular array of positive integers
\[
\mathbf b = 
\begin{pmatrix}
 b_{11}&b_{12}&\cdots&b_{1\ell}\\
       &b_{22}&\cdots&b_{2\ell}\\
       &      &\ddots&\vdots\\
       &      &      &b_{\ell\ell}
\end{pmatrix}
\]
such that, for $1\le r\le\ell$,
\[
b_{rr}^2=\sq(n_r),
\]
and
\[
n_r=(b_{1r}\cdots b_{rr})(b_{rr}\cdots b_{r\ell}).
\]
\end{lemma}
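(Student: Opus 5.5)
The plan is to build the array one prime at a time: the diagonal entries are read off from the square parts, and each prime of the squarefree parts is sent to an off-diagonal entry. For each $1\le r\le\ell$, write $n_r=\sq(n_r)\,m_r$. Since $\sq(n_r)$ is the largest square divisor of $n_r$, the cofactor $m_r$ is squarefree. We set $b_{rr}=\sq(n_r)^{1/2}$, so the condition $b_{rr}^2=\sq(n_r)$ holds by construction. Expanding the required identity gives
\[
(b_{1r}\cdots b_{rr})(b_{rr}\cdots b_{r\ell})=b_{rr}^2\prod_{i<r}b_{ir}\prod_{j>r}b_{rj}.
\]
Hence it suffices to choose off-diagonal entries $b_{ij}$, for $i<j$, with
\[
m_r=\prod_{i<r}b_{ir}\prod_{j>r}b_{rj},\qquad 1\le r\le\ell .
\]
In words, each $b_{ij}$ is a common factor that we assign to the pair $\{i,j\}$.

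The key observation is a parity count. We have $m_1\cdots m_\ell=n_1\cdots n_\ell/\prod_r\sq(n_r)$, which is a quotient of a square by a square, so $m_1\cdots m_\ell$ is itself a square. Since each $m_r$ is squarefree, a prime $p$ divides $m_1\cdots m_\ell$ to the exponent $\#\{r: p\mid m_r\}$. This number must therefore be even for every prime $p$.

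Now fix a prime $p$ dividing some $m_r$. List the indices $r$ with $p\mid m_r$ in increasing order as $r_1<r_2<\cdots<r_{2t}$, and pair them as $(r_1,r_2),(r_3,r_4),\ldots,(r_{2t-1},r_{2t})$. For $i<j$, define $b_{ij}$ as the product of all primes $p$ for which $(i,j)$ occurs among these pairs; if there are none, set $b_{ij}=1$. All entries are then positive integers.

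It remains to check the identity for $m_r$. Consider a prime $p$ and an index $r$. If $p\mid m_r$, then $r$ lies in exactly one pair for $p$, so $p$ divides exactly one factor of $\prod_{i<r}b_{ir}\prod_{j>r}b_{rj}$, and only to the first power. If $p\nmid m_r$, then $p$ divides none of these factors. Both sides are squarefree with the same prime divisors, so they are equal.

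There is no real obstacle in this argument. The only step needing care is the parity observation, which is exactly where the hypothesis $n_1\cdots n_\ell=\square$ is used.
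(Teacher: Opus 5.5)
Your proof is correct and complete. It differs from the paper mainly in being a proof at all: the paper gives no argument and only cites the lemma as ``a version of'' a result of Vaughan and Wooley, in the form of Benatar--Nishry--Rodgers (Lemma~2.2 there).

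Your route is self-contained. You strip off the square parts, observe that the squarefree kernels $m_r$ have square product, so that every prime divides an even number of them, and then pair up the indices prime by prime to fill the off-diagonal entries.

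This produces exactly the normalized version the paper needs. It also shows why the normalization is the real content of the word ``version''. Without the requirement $b_{rr}^2=\sq(n_r)$, the product identities alone are weaker. For instance, for $(n_1,n_2)=(4,4)$ the array $b_{11}=b_{22}=1$, $b_{12}=4$ satisfies $n_r=(b_{1r}\cdots b_{rr})(b_{rr}\cdots b_{r\ell})$, yet its off-diagonal entry is not squarefree.

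Because you work on the kernels $m_r$ from the start, you get directly that:
\begin{itemize}
\item every off-diagonal $b_{ij}$ is squarefree;
\item $d_r=\prod_{j\ne r}b_{rj}=m_r$;
\item $b_{ij}\mid\gcd(n_i,n_j)$.
\end{itemize}
These are precisely the facts used to obtain \eqref{eq:tuple-pair-majorant}. The arbitrariness of your consecutive pairing does no harm there, since each tuple is recovered from any admissible array via $n_r=b_{rr}^2d_r$.

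The citation is shorter and places the lemma in the Vaughan--Wooley tradition of parametrizing multiplicative equations. However, it leaves the adaptation to the normalized form to the reader, and your argument supplies that adaptation.
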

\begin{proof} This is a version of  a result of  Vaughan and Wooley~\cite{VW}, in the form used by Benatar, Nishry and 
Rodgers~\cite[Lemma~2.2]{BNR}.
\end{proof}

For convenience, we extend the notation symmetrically by setting
$b_{ji}=b_{ij}$ for $1\leq i<j\leq\ell$.  Therefore,
\begin{equation}
 \label{eq: brr dr}
n_r
 =b_{rr}^2 d_r, \quad \text{where} \quad d_r = \prod_{\substack{1\leq j\leq\ell\\j\ne r}}b_{rj}. 
\end{equation}
Since $b_{rr}^2=\sq(n_r)$, the integer $d_r$ is square-free.  Hence,
every $b_{rj}$ with $j\ne r$ is square-free.  Put
\[
\mathcal{T}_h
=\{b\in\N:b\leq h\ \text{and $b$ is square-free}\}.
\]
Let
\begin{equation}\label{def_Du}
 D_{u,h}(d)=\#\{a\in\N:~u<d  a^2\leq u+h\}.
\end{equation}
Since $b_{i,j} \mid n_i-n_j$, it is easy to see that
\begin{equation}
 \label{eq:tuple-pair-majorant}
 R_s^*(h,u)
 \leq
 \sum_{\substack{b_{rj}\in\mathcal{T}_h \\ r<j \leq s}}
 \prod_{r=1}^s D_{u,h}(d_r), 
\end{equation}
where $d_r$ is defined as in~\eqref{eq: brr dr}. 

We use version of the  Cauchy--Schwarz inequality given 
by Lemma~\ref{lem:pair-CS}  to bound the right-hand side of~\eqref{eq:tuple-pair-majorant}.

\subsection{Concluding the proof of Proposition~\ref{bound_nontrivial}}

By Lemma~\ref{lem:large-shift-distinct}, we may assume that
$u \ll h^{2s}$.  

For $s=1$, the distinctness condition is vacuous, and the interval
$(u,u+h]$ contains
$O(1+h/\sqrt{u})=O(h^{1/2})$ squares, so the result is immediate.
In the following context, we assume that $s\geq2$.

Recalling~\eqref{def_Du}, for $1\leq r\leq s$, put
\[
T_r =
\sum_{\substack{b_{rj}\in\mathcal{T}_h\\1\leq j\leq s,\ j\ne r}}
D_{u,h}^2(d_r).
\]
Applying Lemma~\ref{lem:pair-CS}, with $\cX_{ij}=\mathcal{T}_h$, to
\eqref{eq:tuple-pair-majorant}, we obtain
\begin{equation}
\label{eq:optimial_bdd}
R_s^*(h,u)
 \leq \prod_{r=1}^s T_r^{1/2}
 \ll
 \left(\max_{1\leq r\leq s}T_r\right)^{s/2}.
\end{equation}

It remains to show that, for every fixed $m\geq1$,
\[
\sum_{b_1,\ldots,b_m\in\mathcal{T}_h}
 D_{u,h}^2(b_1\cdots b_m) \ll h(\log h)^m.
\]
Write
\[
D_{u,h}(b_1\cdots b_m)
 =\sum_{u<b_1\cdots b_m a^2\leq u+h}1.
\]
Expanding the square, the left-hand side of the preceding estimate is
bounded by
\[
\sum_{b_1,\ldots,b_m\in\mathcal{T}_h}
\left(\sum_{u<b_1\cdots b_m a^2\leq u+h}1\right)^2
=
\sum_{b_1,\ldots,b_m\in\mathcal{T}_h}
\sum_{u<b_1\cdots b_m a^2\leq u+h}1+\Sigma_2, 
\]
where the term $\Sigma_2$ counts pairs
$b_1\cdots b_m a^2,b_1\cdots b_m c^2\in(u,u+h]$ with $a\ne c$.
Then $0<b_1\cdots b_m|a^2-c^2|<h$.  This implies that for
each fixed $(b_1,\ldots,b_m)$ the number of such pairs is
$O\(h/(b_1\cdots b_m)\)$. 
It follows that
\[
\Sigma_2
 \ll 
 h\sum_{b_1,\ldots,b_m\in\mathcal{T}_h}
 \frac{1}{b_1\cdots b_m}
 \leq
 h\left(\sum_{b\leq h}\frac1b\right)^m
 \ll h(\log h)^m.
\]
For the diagonal term, let $f_m(n)$ count the representations
\[
n=a^2b_1\cdots b_m,
\]
where $a,b_1,\ldots,b_m\in\N$ and every $b_i$ is square-free.
Let $\tau_k$ denote the $k$-fold divisor function, namely
\[
\tau_k(n)
=\#\{(d_1,\ldots,d_k)\in\N^k:~d_1\cdots d_k=n\}.
\]
Grouping $a^2b_1$ as a single factor gives an injective map
\[
(a,b_1,\ldots,b_m)
\longmapsto
(a^2b_1,b_2,\ldots,b_m)
\]
from the representations counted by $f_m(n)$ to the ordered
$m$-fold factorizations of $n$. Indeed, since $b_1$ is square-free,
$a$ and $b_1$ are uniquely determined by $a^2b_1$. Hence, 
\[
f_m(n)\leq\tau_m(n).
\]

Recalling that $u\ll h^{2s}$, the bound of Shiu~\cite[Theorem~1]{Shiu}
shows that the diagonal term is bounded by
\[
\sum_{u<n\leq u+h}\tau_m(n)
\ll h(\log h)^{m-1}. 
\]
Each $T_r$ involves exactly $s-1$
variables, and therefore~\eqref{eq:optimial_bdd} gives
\[
R_s^*(h,u)
 \ll \(h(\log h)^{s-1}\)^{s/2}
 =h^{s/2}(\log h)^{s(s-1)/2}.
\]
This proves  Proposition~\ref{bound_nontrivial} and thus 
Theorem~\ref{thm:short_bound}.

\section{Bounds for moments of character sums}

\subsection{Initial transformations}

Let
\[
M_{2s}^*(Q;\balpha,u,h)
 =\sum_{\substack{p\in[Q,2Q]\\p~\text{prime}}}
  |S_p^*(\balpha,u,h)|^{2s},
\]
where
\[
S_p^*(\balpha,u,h)
 =\sum_{\substack{u<n\le u+h\\n~\text{odd}}}\alpha_n\ls{n}{p}.
\]

Let $\ell$ be the integer determined by
\[
2^\ell\le h^{1/2}<2^{\ell+1}.
\]
Partitioning the integers $n\in(u,u+h]$ according to their $2$-adic
valuations $i\le\ell$, and bounding the contribution of the remaining
integers by $h/2^\ell+1\ll h^{1/2}$, we obtain
\[
S_p(\balpha,u,h)
 =\sum_{i=0}^\ell\ls{2^i}{p}
  S_p^*(\bbeta_i,u2^{-i},h2^{-i})+O(h^{1/2}),
\]
where
\[
\bbeta_i=(\beta_{i,n})_{u2^{-i}<n\le u2^{-i}+h2^{-i}},
\qquad \beta_{i,n}=\alpha_{2^in}.
\]

Writing $1=(i+1)^{-1}(i+1)$ and applying the H\"older inequality, we
obtain
\begin{align*}
|S_p(\balpha,u,h)|^{2s}
&\ll\left(\sum_{i=0}^\ell
 |S_p^*(\bbeta_i,u2^{-i},h2^{-i})|\right)^{2s}+h^s\\
&=\left(\sum_{i=0}^\ell(i+1)^{-1}(i+1)
 |S_p^*(\bbeta_i,u2^{-i},h2^{-i})|\right)^{2s}+h^s\\
&\le\left(\sum_{i=0}^\ell(i+1)^{-2s/(2s-1)}\right)^{2s-1}\\
&\qquad{}\times\sum_{i=0}^\ell(i+1)^{2s}
 |S_p^*(\bbeta_i,u2^{-i},h2^{-i})|^{2s}+h^s\\
&\ll\sum_{i=0}^\ell(i+1)^{2s}
 |S_p^*(\bbeta_i,u2^{-i},h2^{-i})|^{2s}+h^s.
\end{align*}
It follows that
\begin{equation}
\label{eq: M*-bound}
M_{2s}(Q;\balpha,u,h)
 \ll\sum_{i=0}^\ell(i+1)^{2s}
 M_{2s}^*(Q;\bbeta_i,u2^{-i},h2^{-i})+h^s\pi(Q).
\end{equation}

We next split $S_p^*(\balpha,u,h)$ according to the residue class
modulo $4$. Let $\cN_+$ and $\cN_-$ denote the sets of integers in
$(u,u+h]$ that are congruent to $1$ and $-1$ modulo $4$, respectively,
and, for $\sigma\in\{+,-\}$, define
\[
S_p^\sigma(\balpha,u,h)
 =\sum_{n\in\cN_\sigma}\alpha_n\ls{n}{p}
\]
and
\[
M_{2s}^\sigma(Q;\balpha,u,h)
 =\sum_{\substack{p\in[Q,2Q]\\p~\text{prime}}}
  |S_p^\sigma(\balpha,u,h)|^{2s}.
\]
Since
\[
M_{2s}^*(Q;\balpha,u,h)
 \ll M_{2s}^-(Q;\balpha,u,h)+M_{2s}^+(Q;\balpha,u,h),
\]
it is easy to see from~\eqref{eq: M*-bound} that to prove
Theorems~\ref{thm: Bound M2Quh} and~\ref{thm: Bound MsQuh}, it suffices
to establish
\begin{equation}
\label{eq: M2j-bound}
M_2^\sigma(Q;\balpha,u,h)
 \ll h\pi(Q)\log h+h^2Q^{7/8+o(1)}
\end{equation}
unconditionally, and
\begin{equation}
\label{eq: Msj-bound}
M_{2s}^\sigma(Q;\balpha,u,h)
 \ll h^s(\log h)^{s(2s-1)}\pi(Q)+h^{2s}Q^{1/2+o(1)}
\end{equation}
under the GRH, for each $\sigma\in\{+,-\}$.

\subsection{Using Selberg weights}

Fix $\varepsilon\in(0,1/3)$, set $z=Q^\varepsilon$, and let $\fP$ be
defined by~\eqref{eq: PrimeProd}. In the remainder of the proof, when
the denominator is not an odd prime, the symbol
$\left(\frac{\cdot}{q}\right)$ is understood as the Kronecker symbol.

For $\sigma\in\{+,-\}$, it follows from~\eqref{eq: sum lambda_e e|q}
that
\begin{align*}
M_{2s}^\sigma(Q;\balpha,u,h)
&=\sum_{\substack{p\in[Q,2Q]\\p~\text{prime}}}
 \left|\sum_{n\in\cN_\sigma}\alpha_n\left(\frac{n}{p}\right)\right|^{2s}\\
&\le\sum_{\substack{q\in[Q,2Q]\\\gcd(q,\fP)=1}}
 \left|\sum_{n\in\cN_\sigma}\alpha_n\left(\frac{n}{q}\right)\right|^{2s}\\
&\le\sum_{q\in[Q,2Q]}\sum_{e\mid q}\lambda_e^+
 \sum_{n_1,\ldots,n_{2s}\in\cN_\sigma}
 \alpha_{n_1}\cdots\alpha_{n_s}\\
&\qquad{}\times
 \overline{\alpha_{n_{s+1}}\cdots\alpha_{n_{2s}}}
 \left(\frac{n_1\cdots n_{2s}}{q}\right).
\end{align*}

The tuples for which $n_1\cdots n_{2s}$ is a square contribute at most
\[
|\fU_\square^{\sigma,s}|
 \le R_{2s}(h,u)\sum_{q\in[Q,2Q]}\sum_{e\mid q}\lambda_e^+.
\]
Hence, by~\eqref{eq: sum lambda_n dyadic},
\begin{equation}
\label{eq: square}
|\fU_\square^{\sigma,s}|\ll R_{2s}(h,u)\pi(Q).
\end{equation}

For $\sigma\in\{+,-\}$, we have
\[
n_1\cdots n_{2s}\equiv1\pmod4,\qquad
n_1,\ldots,n_{2s}\in\cN_\sigma.
\]
Quadratic reciprocity therefore shows that the contribution from tuples
for which $n_1\cdots n_{2s}$ is not a square is
\begin{equation}
\label{eq: non square prelim 1}
\begin{split}
\fU_{\boxtimes}^{\sigma,s}
=\sum_{q\in[Q,2Q]}&\sum_{e\mid q}\lambda_e^+
 \sum_{\substack{n_1,\ldots,n_{2s}\in\cN_\sigma\\
                  n_1\cdots n_{2s}\ne\square}}
 \alpha_{n_1}\cdots\alpha_{n_s}\\
&\hspace{35mm}\times
 \overline{\alpha_{n_{s+1}}\cdots\alpha_{n_{2s}}}
 \left(\frac{q}{n_1\cdots n_{2s}}\right).
\end{split}
\end{equation}

\subsection{The nonsquare contribution}
\label{sec: Bound U}

Changing the order of summation in~\eqref{eq: non square prelim 1} and
using~\eqref{eq:lambda_n = 0}, we obtain
\begin{equation}
\label{eq: non square prelim 2}
\begin{split}
\fU_{\boxtimes}^{\sigma,s}
=\sum_{\substack{n_1,\ldots,n_{2s}\in\cN_\sigma\\
                  n_1\cdots n_{2s}\ne\square}}
 &\alpha_{n_1}\cdots\alpha_{n_s}
 \overline{\alpha_{n_{s+1}}\cdots\alpha_{n_{2s}}}\\
&\quad\times\sum_{e\le z^2}\lambda_e^+
 \sum_{\substack{q\in[Q,2Q]\\e\mid q}}
 \ls{q}{n_1\cdots n_{2s}}.
\end{split}
\end{equation}

If $s=1$, Lemma~\ref{lem:Burg} gives
\begin{align*}
\left|\sum_{\substack{q\in[Q,2Q]\\e\mid q}}\ls{q}{n_1n_2}\right|
&=\left|\ls{e}{n_1n_2}
 \sum_{Q/e\le v\le2Q/e}\ls{v}{n_1n_2}\right|\\
&\ll(Q/e)^{1/2}(u+h)^{2\cdot3/16+o(1)}
 \le Q^{7/8+o(1)}e^{-1/2}.
\end{align*}
It follows from~\eqref{eq: non square prelim 2} that
\[
|\fU_{\boxtimes}^{\sigma,1}|
 \ll h^2Q^{7/8+o(1)}
 \sum_{e\le z^2}|\lambda_e^+|e^{-1/2}
 \le h^2Q^{7/8+o(1)}\sum_{e\le z^2}|\lambda_e^+|.
\]
Thus, by~\eqref{eq: sum|lambda_n|},
\begin{equation}
\label{eq: non-square 2}
|\fU_{\boxtimes}^{\sigma,1}|
 \le h^2Q^{7/8+o(1)}\frac{z^2}{(\log z)^2}.
\end{equation}

Under the GRH, Lemma~\ref{lem:GRH} gives, for every fixed $s\ge1$,
\begin{align*}
\left|\sum_{\substack{q\in[Q,2Q]\\e\mid q}}
 \ls{q}{n_1\cdots n_{2s}}\right|
&=\left|\ls{e}{n_1\cdots n_{2s}}
 \sum_{Q/e\le t\le2Q/e}\ls{t}{n_1\cdots n_{2s}}\right|\\
&\le (Q/e)^{1/2}(u+h)^{o(1)}
 \le Q^{1/2+o(1)}e^{-1/2}.
\end{align*}
Consequently,
\begin{equation}
\label{eq: non-square s}
|\fU_{\boxtimes}^{\sigma,s}|
 \ll h^{2s}Q^{1/2+o(1)}\frac{z^2}{(\log z)^2}.
\end{equation}

\subsection{Completion of the proof}

Combining~\eqref{eq: square} and~\eqref{eq: non-square 2}, and then
applying Theorem~\ref{thm:short_bound} with $s=2$, gives
\[
M_2^\sigma(Q;\balpha,u,h)
 \ll h\pi(Q)\log h
   +h^2Q^{7/8+o(1)}\frac{z^2}{(\log z)^2}.
\]
Given any $\delta>0$, choose $\varepsilon<\min\{1/3,\delta/4\}$. Since
$z=Q^\varepsilon$, the preceding estimate yields~\eqref{eq: M2j-bound}
with $Q^{7/8+\delta}$ in place of $Q^{7/8+o(1)}$ and the implied constant, 
which may depend on $\delta$. As $\delta$ is
arbitrary, this proves~\eqref{eq: M2j-bound}.

Similarly, under the GRH, combining~\eqref{eq: square}
and~\eqref{eq: non-square s} with Theorem~\ref{thm:short_bound}, 
applied to $R_{2s}(h,u)$,
gives
\[
M_{2s}^\sigma(Q;\balpha,u,h)
 \ll h^s(\log h)^{s(2s-1)}\pi(Q)
   +h^{2s}Q^{1/2+o(1)}\frac{z^2}{(\log z)^2}.
\]
Choosing $\varepsilon$ arbitrarily small proves~\eqref{eq: Msj-bound}
and implies Theorems~\ref{thm: Bound M2Quh} and~\ref{thm: Bound MsQuh}.

\section{Comments and open questions}
\label{sec:comm}
For each prescribed interval $(u,u+h]$, Theorem~\ref{thm: Bound M2Quh}
implies that, as $Q\to\infty$ and $h\to\infty$, quadratic residues and
non-residues are asymptotically equidistributed for almost all primes
$p\in[Q,2Q]$.
 
 
Although Theorems~\ref{thm: Bound M2Quh} and~\ref{thm: Bound MsQuh}
were stated in full generality, they are most useful when $h$ is very
small, so that the first term dominates. In this regime, the factor
$\pi(Q)$ in place of $Q$ is essential. Outside this range, one may extend
the summation from primes $p$ to all integers $q$ and use a simpler
argument that requires no sieve.

It is natural to ask whether an analogous result can be obtained for
primitive roots. Quadratic reciprocity plays a central role in our
argument, which therefore does not extend directly to the other characters
needed for this problem. For primitive roots in initial intervals, the
best currently known results and the method of~\cite{KST} make it possible
to study intervals $[1,h]$ of length
\[
h\ge\exp\left(\frac{A(\log\log Q)^2}{\log\log\log Q}\right)
\]
for some constant $A>0$. These methods do not apply to intervals away
from the origin.

Finally, a recent result of de la Bret{\`e}che, Wang, and
Xu~\cite{dlBWX} can be combined with our approach to study moments of
short character sums with polynomial arguments.

\section*{Acknowledgements} 

The authors are grateful to R{\'e}gis de la Bret{\`e}che  for very interesting discussions 
on bounding the quantity $\cR_s(h,u)$ and in particular for informing us about his work 
on an alternative approach to an optimal estimate of $\cR_s(h,u)$.

During the preparation of this work, M.~M. was supported by the French
National Research Agency (ANR) under project ANR-25-CE40-1961-01;
I.~S. was supported by Australian Research Council Grant DP230100534;
and Y.~X. was supported by the China Scholarship Council.


\bibliographystyle{plain}

\end{document}